\documentclass[11pt,a4paper]{article}

\usepackage[T1]{fontenc}
\usepackage[utf8]{inputenc}
\usepackage[english]{babel}
\usepackage{lmodern}
\usepackage{amsmath,amsthm,amssymb,mathtools,mathrsfs}
\usepackage{a4wide}
\usepackage{enumitem}
\usepackage{float}
\usepackage{tocloft}
\usepackage{xcolor}
\usepackage{url}
\usepackage{hyperref}
\usepackage[hyperpageref]{backref}
\usepackage{orcidlink}
\usepackage{tikz}
\usetikzlibrary{arrows.meta}

\mathtoolsset{showonlyrefs}
\allowdisplaybreaks
\numberwithin{equation}{section}

\definecolor{ForestGreen}{rgb}{0.1,0.6,0.05}
\definecolor{EgyptBlue}{rgb}{0.063,0.1,0.6}
\definecolor{RipeOlive}{HTML}{556B2F}
\hypersetup{
colorlinks=true, linkcolor=EgyptBlue, citecolor=ForestGreen, urlcolor=RipeOlive
}

\newtheorem{theorem}{Theorem}[section]
\newtheorem{proposition}[theorem]{Proposition}
\newtheorem{lemma}[theorem]{Lemma}
\newtheorem{corollary}[theorem]{Corollary}
\theoremstyle{definition}
\newtheorem{assumption}[theorem]{Assumption}
\newtheorem{remark}[theorem]{Remark}

\newcommand{\R}{\mathbb R}
\newcommand{\Om}{\Omega}
\newcommand{\eps}{\varepsilon}
\newcommand{\ph}{\varphi_1}
\newcommand{\Fcal}{\mathcal F}
\newcommand{\Acal}{\mathcal A}
\newcommand{\W}{W_0^{1,p}(\Omega)}
\newcommand{\dual}[2]{\langle #1,#2\rangle}
\newcommand{\weakto}{\rightharpoonup}
\newcommand{\doi}[1]{\href{https://doi.org/#1}{doi:#1}}

\title{\vspace*{-5ex}
On the generalized Fredholm alternative for the $p$-Laplacian\\ with resonant subhomogeneous terms}
\author{Vladimir Bobkov}
\date{}

\AtEndDocument{%
 \par\medskip\bigskip
 \begin{tabular}{@{}l@{}}
 (V.~Bobkov)\\[0.2em]
 \textsc{Institute of Mathematics, Ufa Federal Research Centre, RAS}\\
 \textsc{Chernyshevsky str. 112, 450008 Ufa, Russia}\\[0.3em]
 \orcidlink{0000-0002-4425-0218} 0000-0002-4425-0218\\[0.3em]
 \textit{E-mail addresses}: \texttt{bobkov@matem.anrb.ru},
 \texttt{bobkovve@gmail.com}
 \end{tabular}}

\begin{document}

\maketitle

\vspace*{-5ex}
\begin{abstract}
Let $1\le q<p$ and let $\lambda_1$ be the first eigenvalue of the $p$-Laplacian in a bounded domain $\Omega$.
We study the energy functional
$$
 E_\lambda(u)=\frac1p\left(\int_\Om|\nabla u|^p\,dx
 -\lambda\int_\Om|u|^p\,dx\right)-\Fcal(u),
 \quad u\in W_0^{1,p}(\Om),
$$
where $\Fcal$ is positively $q$-homogeneous and vanishes along the first eigenspace.
Assuming a suitable relation between $\Fcal$ and the $\kappa$-th power of the principal part of $E_{\lambda_1}$ near this eigenspace, we describe the behavior of $E_{\lambda_1}$ according to the relations $p\kappa<q$, $p\kappa=q$, or $p\kappa>q$.
In particular, the functional is unbounded from below in the first case, and has a negative infimum in the last case.
We then study how sufficiently small $q$-homogeneous perturbations of $\Fcal$ influence the geometry of $E_\lambda$.
In this way, we describe assumptions guaranteeing the existence of three critical points in a left neighborhood of $\lambda_1$ and two critical points in a right neighborhood of $\lambda_1$, which indicates an $S$-shaped structure of the solution set.
The results are applied to double-phase functionals and the nonlinear Fredholm alternative.

\smallskip
\noindent\textbf{Keywords}: $p$-Laplacian; first eigenvalue; nonlinear
Fredholm alternative; subhomogeneous perturbation; three critical points; $S$-shaped solution set.

\noindent\textbf{MSC2020}: 35J92, 35B38, 58E05.
\end{abstract}

\begin{quote}
\tableofcontents
\addtocontents{toc}{\vspace*{-2ex}}
\end{quote}

\section{Introduction}

Let $\Om\subset\R^N$ be a bounded domain, $N\ge1$, and let $1\le q<p<+\infty$.
For $\lambda \in \R$, consider the energy functional
\begin{equation}\label{eq:intro-energy}
 E_\lambda(u)
 =\frac1p H_\lambda(u)
 -\Fcal(u),
 \quad u\in W_0^{1,p}(\Om),
\end{equation}
where
$$
H_\lambda(u)=\|\nabla u\|_p^p-\lambda\|u\|_p^p, 
$$
and the functional $\Fcal$ is positively $q$-homogeneous. 
When $\Fcal \in C^1(W_0^{1,p}(\Om),\R)$ and it has the integral form $\Fcal(u) = \int_\Om F(x,u,\nabla u)\,dx$, every critical point of $E_\lambda$ satisfies
\begin{align}\label{eq:EL-intro}
 &\int_\Om|\nabla u|^{p-2}\nabla u\cdot\nabla v\,dx
 -\lambda\int_\Om|u|^{p-2}uv\,dx\notag\\
 &\quad=\int_\Om F'_s(x,u,\nabla u)v\,dx
 +\int_\Om \nabla_\xi F(x,u,\nabla u)\cdot\nabla v\,dx,
 \quad v\in W_0^{1,p}(\Om),
\end{align}
where $F'_s$ and $\nabla_\xi F$ correspond to the derivatives of $F=F(x,s,\xi)$ with respect to the second and third variable, respectively. 
Thus, under these assumptions, the critical points are weak solutions of
\begin{equation}\label{eq:intro-problem}
 -\Delta_pu-\lambda|u|^{p-2}u
 =F'_s(x,u,\nabla u)-\operatorname{div}\nabla_\xi F(x,u,\nabla u)
 \quad\text{in }\Om.
\end{equation}
The left-hand side of \eqref{eq:intro-problem} corresponds to the eigenvalue problem for the $p$-Laplacian, and the right-hand side can be understood as its subhomogeneous perturbation.
Therefore, questions of existence and multiplicity of solutions of \eqref{eq:intro-problem} fall into the scope of the generalized Fredholm alternative; see, e.g., \cite{AmbrosettiArcoya1995,AnaneGossez1990,DrabekRobinson1999,GirgTakac2008}.
As in the classical Fredholm alternative (which assumes $p=2$ and $F(x,s,\xi)=f(x)s$), the answers depend strongly on the position of $\lambda$ with respect to the eigenvalues of the $p$-Laplacian and on a relation between the subhomogeneous term and the corresponding eigenfunctions.

In the present work, we are interested in $\lambda$ located in a neighborhood of the first eigenvalue $\lambda_1$.
This eigenvalue can be characterized as
\begin{equation}\label{eq:lambda1}
 \lambda_1=\inf\left\{
 \frac{\int_\Om|\nabla u|^p\,dx}{\int_\Om|u|^p\,dx}:~
 u\in W_0^{1,p}(\Om)\setminus\{0\}\right\}.
\end{equation}
It is known that $\lambda_1>0$ and it is isolated in the set of all eigenvalues of the $p$-Laplacian \cite{anane1987}.
A first eigenfunction $\ph$ is unique up to multiplication by a nonzero constant \cite{Lind}.
Moreover, $\ph\in C^{1,\beta}_{\mathrm{loc}}(\Om)\cap L^\infty(\Omega)$ for some $\beta\in(0,1)$ \cite{Tolksdorf1984}, and it can be chosen so that
\begin{equation}\label{eq:phi-normalization}
 \|\nabla\ph\|_p=1,
 \quad
 \|\ph\|_p^p=\lambda_1^{-1},
\quad
\ph>0\ \text{in }\Om.
\end{equation}
In particular, we have $H_{\lambda_1}(u)\ge0$ for any $u \in \W$, and equality holds if and only if $u \in \R\ph$. 
In other words, $H_{\lambda_1}(u) \geq 0$ is the Poincar\'e inequality with the sharp constant.

In the particular case of the $1$-homogeneous functional $\Fcal(u) = \int_\Om f(x) u\,dx$, 
the equation~\eqref{eq:intro-problem} reads as
\begin{equation}\label{eq:intro-fredholm}
 -\Delta_pu-\lambda|u|^{p-2}u=f(x)
 \quad\text{in }\Om.
\end{equation}
The investigation of \eqref{eq:intro-fredholm} is usually referred to as the nonlinear Fredholm alternative for the $p$-Laplacian, see, e.g.,  \cite{DrabekGirgTakacUlm2004,DrabekHolubova2001,Takac2002,Takac2006}.
In particular, imposing the $L^2(\Omega)$-orthogonality assumption
\begin{equation}\label{eq:intro-orthogonality}
\Fcal(\ph) = \int_\Om f(x) \ph\,dx=0,
\end{equation}
Tak\'a\v c \cite{Takac2002} established the existence of solutions of \eqref{eq:intro-fredholm} at $\lambda=\lambda_1$ for a certain class of source functions $f$ and gave a qualitative description of the corresponding solution set.
The work \cite{Takac2006} further provided at least \textit{three} distinct solutions in a punctured neighborhood of $\lambda_1$ for sufficiently small ``nonorthogonal'' perturbations of $f$. 

Observations somewhat related to \cite{Takac2002,Takac2006} were made by Tanaka and the author in \cite{BobkovTanaka2018,BobkovTanaka2022Multiplicity,BobkovTanaka2023Indefinite} 
for the following two families of subhomogeneous functionals with $q>1$:
$$
\Fcal(u) = \frac{1}{q}\int_\Omega (\mu |u|^q - |\nabla u|^q) \,dx
\quad \text{and} \quad
\Fcal(u) = \frac{1}{q}\int_\Omega a(x) |u|^q \,dx,
$$
where $\mu \in \R$ and $a$ is an indefinite weight.
The critical points of the corresponding energy functionals $E_\lambda$ are (weak) solutions of the equations
\begin{align}
\label{eq:intro-models:1}
 -\Delta_pu-\Delta_qu
 &=\lambda|u|^{p-2}u+\mu|u|^{q-2}u \quad\text{in }\Om,\\
 \label{eq:intro-models:2}
 -\Delta_pu
 &=\lambda|u|^{p-2}u+a(x)|u|^{q-2}u
 \quad \text{in }\Om,
\end{align}
respectively. 
The $(p,q)$-Laplace equation \eqref{eq:intro-models:1} has two spectral parameters $\lambda$ and $\mu$, and the critical point
$$
 (\lambda,\mu)=(\lambda_1,\mu_*), 
 \quad\text{where}\quad
 \mu_*=\frac{\|\nabla\ph\|_q^q}{\|\ph\|_q^q},
$$
is resonant for both parts of $E_\lambda$, that is, $H_{\lambda_1}(\ph) = 0$ and 
\begin{equation}\label{eq:intro-orthogonality:2}
\Fcal(\ph) 
=
 \frac{1}{q}\int_\Omega (\mu_* \ph^q - |\nabla \ph|^q) \,dx
 =
0.
\end{equation}
At this critical spectral point, the geometry of $E_{\lambda_1}$ changes according to $p<2q$, $p=2q$, $p>2q$: the energy is unbounded from below in the first case, while it attains a global infimum in the last case, see \cite{BobkovTanaka2018}.
The subsequent work \cite{BobkovTanaka2022Multiplicity} locates regions in the $(\lambda,\mu)$-plane with two and three positive solutions of \eqref{eq:intro-models:1} near $(\lambda_1,\mu_*)$.
The equation with indefinite subhomogeneous nonlinearity \eqref{eq:intro-models:2} admits a similar change in the structure of the solution set at $\lambda=\lambda_1$ in the cases $p<2q$, $p=2q$, $p>2q$, under the resonance condition
\begin{equation}\label{eq:intro-orthogonality:3}
\Fcal(\ph)=\int_\Om a(x)\ph^q\,dx=0,
\end{equation}
see \cite{BobkovTanaka2023Indefinite}. The existence of multiple nonnegative solutions of \eqref{eq:intro-models:2} in a neighborhood of $\lambda_1$ is also obtained in \cite{BobkovTanaka2023Indefinite} under small nonresonant perturbations of the weight $a$. 

The works \cite{Takac2002,Takac2006} and  \cite{BobkovTanaka2018,BobkovTanaka2022Multiplicity,BobkovTanaka2023Indefinite} 
are the main motivation for the present study: although the considered functionals $\Fcal$ have different forms, their variational geometry and corresponding multiplicity phenomena are significantly similar. 
Our main aim is to describe a set of assumptions on the general functional $\Fcal$ guaranteeing such properties. 
The principal assumption, apart from the positive $q$-homogeneity, will be the resonance condition as in \eqref{eq:intro-orthogonality}, \eqref{eq:intro-orthogonality:2}, \eqref{eq:intro-orthogonality:3}, see Assumption~\ref{ass:resonance} in Section~\ref{sec:framework} below:
\begin{equation}\label{eq:intro-resonance}
 \Fcal(\ph)=\Fcal(-\ph)=0.
\end{equation}
Since we always have $H_{\lambda_1}(\pm \ph)=0$, \eqref{eq:intro-resonance} can be seen as the most ``degenerate'' condition for the geometry of $E_{\lambda_1}$. 
Additional Assumptions~\ref{ass:upper} and \ref{ass:approach} on $\Fcal$ quantify the rate at which $\Fcal$ vanishes relative to the $\kappa$-th power of $H_{\lambda_1}$ near the first eigenspace $\R \ph$.
This yields the change of the geometry of $E_{\lambda_1}$ in the regimes $p\kappa<q$, $p\kappa=q$, $p\kappa>q$, see Theorem~\ref{thm:geometry} below.
The particular case $\kappa=1/2$ corresponds to the mentioned results of \cite{Takac2002,Takac2006} on the equation \eqref{eq:intro-fredholm} and of \cite{BobkovTanaka2018,BobkovTanaka2022Multiplicity,BobkovTanaka2023Indefinite} on \eqref{eq:intro-models:1}, \eqref{eq:intro-models:2}.

The second step in our study is to investigate how the addition of the small positively $q$-homogeneous perturbation $\mu\widetilde{\Fcal}$ influences the geometry of $E_\lambda$ in a neighborhood of $\lambda_1$. 
In the case $p\kappa>q$, we describe regions in the $(\lambda,\mu)$-plane where, under some further assumptions on $\Fcal$ and $\widetilde{\Fcal}$, the perturbed functional $E_{\lambda,\mu}$ has at least two or three critical points, see Theorem~\ref{thm:three} below. 
Vaguely speaking, this indicates that these critical points might form an $S$-shaped bifurcation curve with respect to $\lambda$, see Figure~\ref{fig:S-shaped} below. 

The rest of the present work is organized as follows.
In Section~\ref{sec:framework}, we introduce main assumptions on $\Fcal$.
Section~\ref{sec:geometry} is devoted to the first main result, Theorem~\ref{thm:geometry}, on the geometry of $E_{\lambda}$. 
In Section~\ref{sec:perturbations}, we establish the second main result, Theorem~\ref{thm:three}, on the multiplicity of critical points of the perturbed functional $E_{\lambda,\mu}$. 
Section~\ref{sec:verification} discusses some sufficient conditions for the abstract assumptions from Section~\ref{sec:framework}. 
Finally, we provide some applications of the main results in Section~\ref{sec:applications}. 

\section{Assumptions on \texorpdfstring{$\Fcal$}{F}}\label{sec:framework}

In what follows, we use the notation
\begin{equation}\label{eq:H-and-S}
 S=\{u\in\W:~\|\nabla u\|_p=1\}.
\end{equation}
We state three main assumptions on $\Fcal$ that will determine the geometry of $E_{\lambda}$.
\begin{assumption}\label{ass:resonance}
The functional $\Fcal$ is continuous in $\W$ and positively $q$-homogeneous, the latter meaning that 
\begin{equation}\label{eq:F-homogeneous}
 \Fcal(tu)=t^q\Fcal(u)
 \quad \text{for any}~ 
 t\ge0,~ u\in\W.
\end{equation}
Moreover, $\Fcal$ vanishes along the first eigenspace.
In view of \eqref{eq:F-homogeneous}, this is equivalent to
\begin{equation}\label{eq:resonance}
 \Fcal(\ph)=\Fcal(-\ph)=0.
\end{equation}
\end{assumption}

\begin{assumption}\label{ass:upper}
There exist $\kappa>0$ and $C,\delta>0$ such that
\begin{equation}\label{eq:upper-local}
 \Fcal(w)\le C H_{\lambda_1}(w)^\kappa
\end{equation}
for every $w\in S$ satisfying
$$
 \min_{\sigma\in\{-1,1\}}
 \|\nabla(w-\sigma\ph)\|_p<\delta.
$$
\end{assumption}

\begin{assumption}\label{ass:approach}
Let $\kappa>0$ be as in Assumption~\ref{ass:upper}.
There exist $C,\eps_0>0$ such that, for every $\eps\in(0,\eps_0)$, one can find $w_\eps\in S$ satisfying
\begin{equation}\label{eq:approach-family}
 0<H_{\lambda_1}(w_\eps)\le C\eps
 \quad\text{and}\quad
 \Fcal(w_\eps)\ge C^{-1}\eps^\kappa.
\end{equation}
\end{assumption}

\begin{remark}\label{rem:growth-euler}
The continuity of $\Fcal$ at the origin and \eqref{eq:F-homogeneous} yield the existence of $C>0$ such that
\begin{equation}\label{eq:F-growth-abstract}
 |\Fcal(u)|\le C\|\nabla u\|_p^q,
 \quad u\in\W.
\end{equation}
In particular, $\Fcal$ is bounded on $S$ by $C$. 

If the directional derivative $\dual{\Fcal'(u)}{u}$ exists for some $u \in \W$, then 
the differentiation of \eqref{eq:F-homogeneous} with respect to $t$ at $t=1$ gives the homogeneous Euler identity
\begin{equation}\label{eq:Euler-identity}
 \dual{\Fcal'(u)}{u}=q\Fcal(u).
\end{equation}
\end{remark}

\begin{remark}\label{rem:global-upper}
Assumption~\ref{ass:upper} and the inequality \eqref{eq:F-growth-abstract} imply the existence of $C>0$ such that
\begin{equation}\label{eq:upper-sphere}
 \Fcal(w)\le C H_{\lambda_1}(w)^\kappa
 \quad \text{for any}~ w\in S.
\end{equation}
Indeed, it is not hard to see that, for every neighborhood $U$ of $\{\ph,-\ph\}$ in $S$, we have 
\begin{equation}\label{eq:gap-away}
 \inf_{w\in S\setminus U}H_{\lambda_1}(w)>0.
\end{equation}
Consequently, \eqref{eq:upper-sphere} follows from Assumption~\ref{ass:upper} in a sufficiently small $U$, and from \eqref{eq:gap-away} and \eqref{eq:F-growth-abstract} in $S\setminus U$.
By the homogeneity, we also have
\begin{equation}\label{eq:upper-space}
 \Fcal(u)\le C H_{\lambda_1}(u)^\kappa
 \|\nabla u\|_p^{q-p\kappa},
 \quad u\in\W\setminus\{0\}.
\end{equation}
\end{remark}

\section{Geometry of \texorpdfstring{$E_{\lambda}$}{E lambda}}\label{sec:geometry}
In this section, we establish the first main result of the work -- Theorem~\ref{thm:geometry}. 
We start with an auxiliary statement on the fibering structure of $E_\lambda$.
\begin{lemma}\label{lem:fibering}
Let $u\in\W$ be such that $H_\lambda(u)>0$ and $\Fcal(u)>0$.
Then
\begin{equation}\label{eq:fiber-minimizer}
 t(u)=\left(\frac{q\Fcal(u)}{H_\lambda(u)}\right)^{1/(p-q)}
\end{equation}
is the unique minimizer of $t\mapsto E_\lambda(tu)$ in $(0,+\infty)$, and
\begin{equation}\label{eq:fiber-value}
 \min_{t>0}E_\lambda(tu)
 =-C_{p,q}
 \frac{\Fcal(u)^{p/(p-q)}}{H_\lambda(u)^{q/(p-q)}} < 0,
 \quad\text{where}\quad
 C_{p,q}=q^{q/(p-q)}\frac{p-q}{p}.
\end{equation}
If $H_\lambda(u) \geq 0$ and $\Fcal(u)\le 0$, then $E_\lambda(tu)\ge0$ for any $t\ge0$.
\end{lemma}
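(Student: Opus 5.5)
The plan is to reduce everything to a one-variable calculus problem along the ray $t\mapsto tu$. First, I will use the $p$-homogeneity of $H_\lambda$ and the positive $q$-homogeneity \eqref{eq:F-homogeneous} of $\Fcal$ from Assumption~\ref{ass:resonance}. For $t\ge0$ they give
$$
 g(t):=E_\lambda(tu)=\frac{t^p}{p}H_\lambda(u)-t^q\Fcal(u).
$$
So the fiber map is an explicit function of $t$, with coefficients $a=H_\lambda(u)$ and $b=\Fcal(u)$. No differentiability of $\Fcal$ is needed, since only this scalar expression enters.

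Next I treat the case $a>0$, $b>0$. Differentiating gives $g'(t)=t^{q-1}\bigl(a t^{p-q}-qb\bigr)$ for $t>0$. Since $p>q$, the bracket is strictly increasing in $t$ and vanishes exactly at $t_0=(qb/a)^{1/(p-q)}$, which is \eqref{eq:fiber-minimizer}. Hence $g$ is strictly decreasing on $(0,t_0)$ and strictly increasing on $(t_0,\infty)$, so $t_0$ is the unique minimizer on $(0,+\infty)$. This argument also covers $q=1$, because the factor $t^{q-1}$ stays positive for $t>0$.

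To compute the minimum value, I substitute $a t_0^{p}=qb\,t_0^{q}$. This gives
$$
 g(t_0)=t_0^q b\Bigl(\tfrac qp-1\Bigr)=-\tfrac{p-q}{p}\,b\,(qb/a)^{q/(p-q)}.
$$
Collecting powers, $b^{1+q/(p-q)}=b^{p/(p-q)}$, and the constant is $q^{q/(p-q)}(p-q)/p=C_{p,q}$. This is \eqref{eq:fiber-value}, and the value is negative because $b>0$.

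For the last claim, suppose $a\ge0$ and $b\le0$. Then both terms $\frac{t^p}{p}a$ and $-t^q b$ are nonnegative for every $t\ge0$, so $E_\lambda(tu)\ge0$.

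None of these steps is a real obstacle. The only care needed is the exponent bookkeeping in the minimum value and noting that homogeneity is used only for $t\ge0$, which is exactly what \eqref{eq:F-homogeneous} provides.
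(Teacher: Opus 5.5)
Your proposal is correct and follows the paper's proof: you use homogeneity to reduce to the explicit fiber map $\frac{t^p}{p}H_\lambda(u)-t^q\Fcal(u)$, find its unique critical point by differentiation, and substitute to obtain the minimum value. You simply spell out the monotonicity analysis and exponent bookkeeping that the paper leaves to the reader, and your computations check out.
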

\begin{proof}
The positive $p$- and $q$-homogeneity of $H_\lambda$ and $\Fcal$, respectively, gives
\begin{equation}\label{eq:fiber1}
 E_\lambda(tu)=\frac{t^p}{p}H_\lambda(u)-t^q\Fcal(u), \quad t \geq 0.
\end{equation}
By differentiating $t \mapsto E_\lambda(tu)$ with respect to $t$ and analyzing the result, we obtain \eqref{eq:fiber-minimizer}. Substituting $t(u)$ into $E_\lambda(tu)$, we get  \eqref{eq:fiber-value}. The final assertion is immediate from \eqref{eq:fiber1}.
\end{proof}

Let us state the main result on the geometry of $E_{\lambda}$.

\begin{theorem}\label{thm:geometry}
Let Assumption~\ref{ass:resonance} hold.
Then the following assertions are satisfied:
\begin{enumerate}[label=\textup{(\roman*)}]
\item\label{item:geometry-subcritical} 
Let $\lambda<\lambda_1$. Then $E_\lambda$ is coercive and bounded from below.
If $E_\lambda$ is sequentially weakly lower semicontinuous, then its infimum over $\W$ is attained. 
If there exists $u \in \W$ such that $\Fcal(u) > 0$, then 
this infimum is negative.
\item\label{item:geometry-resonant} 
Let $\lambda=\lambda_1$. 
Then the following assertions are satisfied:
\begin{enumerate}[label=\textup{(\alph*)}]
\item\label{item:geometry-subcritical-order} If $p\kappa<q$ and Assumption~\ref{ass:approach} holds, then
$$
 \inf_{\W}E_{\lambda_1}=-\infty.
$$
\item\label{item:geometry-critical-order} If $p\kappa=q$ and Assumptions~\ref{ass:upper} and \ref{ass:approach} hold, then
$$
 -\infty<\inf_{\W}E_{\lambda_1}<0.
$$
\item\label{item:geometry-supercritical-order} If $p\kappa>q$ and Assumptions~\ref{ass:upper} and \ref{ass:approach} hold, then
\begin{equation}\label{eq:resonant-asymptotics}
 -\infty<m_0:=\inf_{\W}E_{\lambda_1}<0
 \quad\text{and}\quad
 \liminf_{\|\nabla u\|_p\to+\infty}E_{\lambda_1}(u)\ge0.
\end{equation}
If, in addition, $E_{\lambda_1}$ is sequentially weakly lower semicontinuous, then $m_0$ is attained. 
\end{enumerate}
\item\label{item:geometry-supercritical} 
Let $\lambda>\lambda_1$. 
Then $\inf_{\W}E_\lambda=-\infty$.
\end{enumerate}
\end{theorem}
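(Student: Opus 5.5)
The plan is to reduce everything to the fibering formula of Lemma~\ref{lem:fibering} together with the growth bound \eqref{eq:F-growth-abstract} and the global upper bound \eqref{eq:upper-space}.

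\textbf{(i) $\lambda<\lambda_1$.} By \eqref{eq:lambda1},
\[
H_\lambda(u)\ge (1-\lambda/\lambda_1)\|\nabla u\|_p^p
\]
when $\lambda\ge0$, and $H_\lambda(u)\ge\|\nabla u\|_p^p$ when $\lambda<0$. Combining this with \eqref{eq:F-growth-abstract} gives
\[
E_\lambda(u)\ge c\|\nabla u\|_p^p-C\|\nabla u\|_p^q .
\]
Since $q<p$, this yields coercivity and a lower bound. If $E_\lambda$ is sequentially weakly lower semicontinuous, a minimizing sequence is bounded, hence weakly convergent along a subsequence in the reflexive space $\W$, and the infimum is attained. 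If $\Fcal(u)>0$, then $H_\lambda(u)>0$ (note $u\neq0$), so \eqref{eq:fiber-value} gives a negative value.

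\textbf{(iii) $\lambda>\lambda_1$.} Here $H_\lambda(\ph)=1-\lambda/\lambda_1<0$, while $\Fcal(\ph)=0$. Therefore
\[
E_\lambda(t\ph)=\frac{t^p}{p}H_\lambda(\ph)\to-\infty \quad\text{as } t\to+\infty .
\]

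\textbf{(ii) $\lambda=\lambda_1$.} For the lower-bound parts, use \eqref{eq:upper-space}. Writing $h=H_{\lambda_1}(u)\ge0$ and $r=\|\nabla u\|_p$, we get
\[
E_{\lambda_1}(u)\ge \frac hp-Ch^\kappa r^{q-p\kappa}.
\]
\begin{itemize}
\item If $p\kappa=q$, then $\kappa<1$, and $h/p-Ch^\kappa$ is bounded below over $h\ge0$. Hence $\inf E_{\lambda_1}>-\infty$.
\item If $p\kappa>q$, I split into two cases. When $\kappa<1$, minimizing $h/p-Ch^\kappa r^{q-p\kappa}$ over $h\ge0$ gives
\[
E_{\lambda_1}(u)\ge -C' r^{(q-p\kappa)/(1-\kappa)}.
\]
The exponent is negative, so the bound tends to $0$ as $r\to\infty$ and is bounded for $r\ge1$. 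For $r\le1$ one simply uses $E_{\lambda_1}\ge -Cr^q\ge -C$ from \eqref{eq:F-growth-abstract}. When $\kappa\ge1$, I instead use $h\le r^p$ (since $\lambda_1>0$) to get
\[
h^\kappa\le h\,r^{p(\kappa-1)},\qquad\text{so}\qquad E_{\lambda_1}(u)\ge h\bigl(1/p-Cr^{q-p}\bigr),
\]
which is $\ge0$ for large $r$.
\end{itemize}
This gives both finiteness of $m_0$ and the $\liminf$ statement in \eqref{eq:resonant-asymptotics}.

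For negativity and for unboundedness, use the family $w_\eps$ of Assumption~\ref{ass:approach}. By \eqref{eq:fiber-value},
\[
\min_{t>0}E_{\lambda_1}(tw_\eps)\le -C_{p,q}\,\frac{(C^{-1}\eps^\kappa)^{p/(p-q)}}{(C\eps)^{q/(p-q)}}=-c\,\eps^{(p\kappa-q)/(p-q)}.
\]
\begin{itemize}
\item If $p\kappa<q$, the exponent is negative, so this tends to $-\infty$ as $\eps\to0$; this proves (a).
\item If $p\kappa\ge q$, any fixed $\eps$ gives a strictly negative value, so the infimum is negative.
\end{itemize}

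Attainment in (c) follows by weak lower semicontinuity once minimizing sequences are bounded. Boundedness comes from the $\liminf$ statement, because $m_0<0\le\liminf$ along sequences with $r\to\infty$.

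The only delicate step is the coercivity-type estimate in (c). It needs the case split $\kappa<1$ versus $\kappa\ge1$, and in the latter case the bound $H_{\lambda_1}(u)\le\|\nabla u\|_p^p$, which holds because $\lambda_1>0$.
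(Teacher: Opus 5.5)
Your proof is correct. Parts (i), (iii), (ii)(a), the negativity of the infimum in (ii)(b),(c), and the attainment argument coincide with the paper's; the difference is in how you obtain the lower bound in (ii)(b),(c) and the $\liminf$ in (ii)(c).

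The paper works on the sphere $S$ with the quotient $Q(w)=\Fcal(w)^p/H_{\lambda_1}(w)^q$. For $p\kappa\ge q$ and $\Fcal(w)>0$, \eqref{eq:upper-sphere} together with $H_{\lambda_1}\le 1$ on $S$ gives $Q(w)\le C H_{\lambda_1}(w)^{p\kappa-q}\le C$, and the fibering formula turns this into a uniform lower bound. The $\liminf$ is then proved by contradiction. A sequence with $\|\nabla u_n\|_p\to\infty$ and $E_{\lambda_1}(u_n)\le C_1<0$ forces $H_{\lambda_1}(w_n)\to0$, hence $Q(w_n)\to0$, so the fiber minima tend to $0$.

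You instead insert the homogeneous bound \eqref{eq:upper-space} into $E_{\lambda_1}$ and minimize explicitly in $h=H_{\lambda_1}(u)$ at fixed $r=\|\nabla u\|_p$. This is more quantitative. For $\kappa<1$ you get the explicit decay $E_{\lambda_1}(u)\ge -C' r^{(q-p\kappa)/(1-\kappa)}$, and for $\kappa\ge1$ you even get $E_{\lambda_1}\ge 0$ outside a ball. The cost is the case split $\kappa<1$ versus $\kappa\ge1$, which the paper's normalization to $S$ absorbs in one step: the bound $H_{\lambda_1}\le 1$ on $S$ plays the role of your $h\le r^p$.

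One small wording gap: in the $\kappa\ge1$ case, say explicitly that the lower bound on bounded sets comes from \eqref{eq:F-growth-abstract}, as you did for $\kappa<1$.
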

\begin{proof}
\ref{item:geometry-subcritical}  If $\lambda<\lambda_1$, then the Poincar\'e inequality $H_{\lambda_1}(u) \geq 0$ and the upper bound \eqref{eq:F-growth-abstract} give
$$
 E_\lambda(u)\ge \frac1p \left(1-\frac{\max\{\lambda,0\}}{\lambda_1}\right)\|\nabla u\|_p^p
 -C\|\nabla u\|_p^q
 \quad \text{for any}~ u\in W_0^{1,p}(\Omega),
$$
which proves the coercivity and boundedness from below.
If $E_\lambda$ is sequentially weakly lower semicontinuous, then the direct method of the calculus of variations justifies that $\inf_{\W}E_\lambda$ is attained.
If there exists $u \in \W$ such that $\Fcal(u) > 0$, then we also have $H_\lambda(u)> H_{\lambda_1}(u) \geq 0$, and Lemma~\ref{lem:fibering} gives $\min_{t>0}E_\lambda(t u) < 0$.
Consequently, the global infimum of $E_\lambda$ is negative.

\ref{item:geometry-resonant} Let $\lambda=\lambda_1$. 
For $u\notin\R\ph$ with $\Fcal(u)>0$, we recall that $H_{\lambda_1}(u)>0$ and denote
\begin{equation}\label{eq:quotient}
 Q(u)=\frac{\Fcal(u)^p}{H_{\lambda_1}(u)^q},
 \quad \text{so that}~ 
 \min_{t>0} E_{\lambda_1}(t u) = -C_{p,q} Q(u)^{1/(p-q)}.
\end{equation}
For the family of functions $\{w_\eps\}$ given by Assumption~\ref{ass:approach}, we get 
\begin{equation}\label{eq:quotient-lower}
\inf_{\W} E_{\lambda_1} 
\leq 
\min_{t>0} E_{\lambda_1}(tw_\eps) 
\leq 
 -C_{p,q} \left(\frac{C^{-p}}{C^q} \, \eps^{p\kappa-q}\right)^{1/(p-q)} < 0.
\end{equation}
This proves the assertion \ref{item:geometry-subcritical-order} by sending $\eps \to 0$, and it gives a negative upper bound for the infimum in \ref{item:geometry-critical-order} and \ref{item:geometry-supercritical-order}.

If $p\kappa\ge q$, then, for any $w\in S$ with $\Fcal(w)>0$, \eqref{eq:upper-sphere} yields
\begin{equation}\label{eq:quotient-upper}
 Q(w)\le C H_{\lambda_1}(w)^{p\kappa-q} \leq C \|\nabla w\|_p^{p(p\kappa-q)} = C.
\end{equation}
Therefore, \eqref{eq:quotient} and the last assertion of Lemma~\ref{lem:fibering} yield a lower bound for $E_{\lambda_1}$ over $\W$.
This proves \ref{item:geometry-critical-order} and also provides a lower bound for $m_0$ in \ref{item:geometry-supercritical-order}.

Assume now that $p\kappa>q$, and let us justify the second inequality in \eqref{eq:resonant-asymptotics}. 
Let $\{u_n\}$ be any sequence such that $\|\nabla u_n\|_p\to+\infty$. 
Denote $w_n = u_n/\rho_n$, where $\rho_n=\|\nabla u_n\|_p$, so that $w_n\in S$.
If $\liminf_{n \to +\infty}E_{\lambda_1}(u_n) \geq 0$, then there is nothing to prove.
Otherwise, there exists a subsequence (which we denote by the same index) along which $\{E_{\lambda_1}(u_n)\}$ is bounded from above by some constant $C_1 < 0$. 
Then the last assertion of Lemma~\ref{lem:fibering} implies that $\Fcal(w_n)>0$ for any sufficiently large $n$.
Recalling from Remark~\ref{rem:growth-euler} that $\Fcal$ is bounded on $S$ by $C>0$, we get
$$
 \frac{\rho_n^p}{p}H_{\lambda_1}(w_n)
 \leq C_1 + \rho_n^q \Fcal(w_n) 
 \leq C_1 + C \rho_n^q,
$$
which yields $H_{\lambda_1}(w_n)\to 0$. 
Since $p\kappa-q>0$ and recalling that $\Fcal(w_n)>0$, we derive from the first inequality in \eqref{eq:quotient-upper} that $Q(w_n)\to0$.
Thus, by \eqref{eq:quotient}, the minimum of $E_{\lambda_1}(tw_n)$ over $t>0$ tends to zero, which contradicts our choice of $\{u_n\}$. 
This proves the second inequality in \eqref{eq:resonant-asymptotics}.

As a consequence of the latter fact and $m_0<0$, any minimizing sequence for $m_0$ is bounded in $\W$.
Hence, if $E_{\lambda_1}$ is sequentially weakly lower semicontinuous, then the global infimum is attained by a nonzero function.

\ref{item:geometry-supercritical}  
Finally, if $\lambda>\lambda_1$, then, by \eqref{eq:resonance} and the normalization \eqref{eq:phi-normalization} of $\ph$, we get
$$
 E_\lambda(t\ph)=\frac{t^p}{p}
 \left(1-\frac{\lambda}{\lambda_1}\right) \to -\infty
 \quad \text{as}~ t \to +\infty.
$$
The proof is complete.
\end{proof}

\begin{remark}\label{rem:attain}
Clearly, if $\Fcal\in C^1(\W,\R)$, then so is $E_\lambda$. 
Consequently, if the global infimum of $E_\lambda$ is attained, then any minimizer is a critical point of $E_\lambda$.
\end{remark}

\begin{remark}\label{rem:borderline}
In the borderline case $p\kappa=q$, Theorem~\ref{thm:geometry} provides no answer on the attainability of the infimum.
In the classical linear Fredholm alternative, which corresponds to $p=2$, $q=1$, $\kappa=1/2$ (see Corollary~\ref{cor:linear-source} below), the global infimum is attained, but the set of minimizers is unbounded in $W_0^{1,2}(\Om)$. 
In the nonlinear Fredholm alternative \eqref{eq:intro-fredholm}, one has $p \neq 2$,  $q=1$, $\kappa=1/2$ (by the same Corollary~\ref{cor:linear-source}, see also Section~\ref{sec:nonlinear-fredholm}), and hence the borderline case $p\kappa=q$ 
\textit{does not arise}. 
For the problems \eqref{eq:intro-models:1} and \eqref{eq:intro-models:2} considered in \cite{BobkovTanaka2018,BobkovTanaka2023Indefinite}, one has $q>1$ and $\kappa=1/2$ (see Section~\ref{sec:double-phase} below), and the attainability question remains open at the threshold $p=2q$.
\end{remark}

\begin{remark}\label{rem:superhomogeneous-range}
If one considers the superhomogeneous case $q>p$ instead of $q<p$, then the fibering geometry is reversed: along every direction $tu$ with $\Fcal(u)>0$ we have $E_\lambda(tu)\to-\infty$ as $t\to+\infty$, and the map $t \mapsto E_\lambda(tu)$ 
has at most one critical point, which is a point of maximum rather than minimum.
In particular, if the set $\{u:\,\Fcal(u)>0\}$ is nonempty, then 
the infimum of $E_\lambda$ over $\W$ is always $-\infty$, and hence the present investigation cannot be extended directly to $q>p$.
\end{remark}

\section{Critical points under perturbations}\label{sec:perturbations}

In this section, we state and prove the second main result of the work -- Theorem~\ref{thm:three}, which investigates how the geometry of $E_\lambda$ changes under perturbations of $\Fcal$. 
Let $\widetilde{\Fcal}$ be a positively $q$-homogeneous functional, and set
\begin{equation}\label{eq:perturbed-functionals}
 \Fcal_\mu=\Fcal+\mu \widetilde{\Fcal}
 \quad\text{and}\quad
 E_{\lambda,\mu}(u)=\frac1pH_\lambda(u)-\Fcal_\mu(u).
\end{equation}
Assume that either $\widetilde{\Fcal}(\ph)>0$ or $\widetilde{\Fcal}(-\ph)>0$, and fix $\sigma\in\{-1,1\}$ such that
\begin{equation}\label{eq:imbalance}
 b:=\widetilde{\Fcal}(\sigma\ph)>0.
\end{equation}
For $\mu>0$, define
\begin{equation}\label{eq:positive-set}
 \Acal_\mu=\{u\in\W:~\Fcal_\mu(u)>0\}.
\end{equation}

\begin{theorem}\label{thm:three}
Let Assumptions~\ref{ass:resonance}, \ref{ass:upper}, \ref{ass:approach} be satisfied, and $p\kappa>q$.
Let, for some $\mu_0>0$, $\Fcal_\mu \in C^1(\W,\R)$ for every $\mu\in[0,\mu_0]$.
If $q>1$, assume that, for every $\mu\in(0,\mu_0]$, either $\Acal_\mu$ is path-connected, or $\Fcal_\mu$ is even and, for every $u\in\Acal_\mu$, one of $u$ and $-u$ can be joined to $\sigma\ph$ by a continuous path in $\Acal_\mu$.
Assume also that, for some $\delta_0>0$,
\begin{enumerate}[label=\textup{(\alph*)}]
\item\label{thm:three:as1} $E_{\lambda,\mu}$ is sequentially weakly lower semicontinuous whenever $|\lambda-\lambda_1|\le\delta_0$ and $\mu \in [0,\mu_0]$;
\item\label{thm:three:as2} $E_{\lambda,\mu}$ satisfies the Palais--Smale condition whenever $0<|\lambda-\lambda_1|\le\delta_0$ and $\mu \in [0,\mu_0]$.
\end{enumerate}
Then there exist $\mu_* \in (0,\mu_0]$, $\delta_* \in (0,\delta_0]$, and $c_*>0$ such that the following assertions hold (see Figure~\ref{fig:parameter-regions}):
\begin{enumerate}[label=\textup{(\roman*)}]
\item\label{item:three-left} If $\mu \in (0, \mu_*)$ and $\lambda \in (\lambda_1-c_*\mu^{p/q}, \lambda_1)$,
then $E_{\lambda,\mu}$ has at least three distinct nonzero critical points: a local minimizer $u_1$, a global minimizer $u_2$, and a critical point $u_3$ of mountain-pass type. 
Moreover, for a fixed $\mu$, $E_{\lambda,\mu}(u_2) \to -\infty$ as $\lambda \nearrow \lambda_1$. 
\item\label{item:two-right} 
If $\mu \in (0, \mu_*)$ and $\lambda \in (\lambda_1, \lambda_1+\delta_*)$,
then $E_{\lambda,\mu}$ has at least two distinct nonzero critical points: a local minimizer $u_1$ and a critical point $u_4$ of mountain-pass type.
\end{enumerate}
Furthermore, we have $E_{\lambda,\mu}(u_i)<0$ for $i=1,2,3,4$.
\end{theorem}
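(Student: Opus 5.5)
The plan is to combine the picture from Theorem~\ref{thm:geometry}\ref{item:geometry-supercritical-order} (a negative global minimum $m_0$ of $E_{\lambda_1}$ attained at bounded distance, with nonnegative energy at infinity) with the new behaviour near $\R\ph$ produced by the perturbation, $\Fcal_\mu(\sigma\ph)=\mu b>0$. This perturbation makes $E_{\lambda,\mu}$ go to $-\infty$ along $t\sigma\ph$ as $\lambda\nearrow\lambda_1$. We then obtain two minimizers and a mountain pass between them.

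\emph{Step 1: the local minimizer $u_1$.} Let $\bar u$ be a global minimizer of $E_{\lambda_1}=E_{\lambda_1,0}$. By Remark~\ref{rem:growth-euler} and the second inequality in \eqref{eq:resonant-asymptotics}, there exist $R>0$ with $\|\nabla \bar u\|_p<R$ and a level gap with
$$
\inf\{E_{\lambda_1}(u):\ \|\nabla u\|_p=R\}\ge m_0/2 .
$$
For $|\lambda-\lambda_1|$ and $\mu$ small, the bound $|E_{\lambda,\mu}-E_{\lambda_1}|\le C(|\lambda-\lambda_1|R^p+\mu R^q)$ on the ball $B_R$ preserves this gap. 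Hence
$$
\inf_{B_R}E_{\lambda,\mu}\le E_{\lambda,\mu}(\bar u)<m_0/2+\text{small}<\inf_{\partial B_R}E_{\lambda,\mu}.
$$
The weak lower semicontinuity~\ref{thm:three:as1} together with weak compactness of $B_R$ gives a minimizer $u_1$ of $E_{\lambda,\mu}$ on $B_R$. It lies in the interior, so it is a local minimizer and a critical point (Remark~\ref{rem:attain}), with $E_{\lambda,\mu}(u_1)<0$. This works for $\lambda$ on both sides of $\lambda_1$ and defines $\delta_*$ and a first bound on $\mu_*$.

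\emph{Step 2: the global minimizer $u_2$ for $\lambda<\lambda_1$.} By Theorem~\ref{thm:geometry}\ref{item:geometry-subcritical}, applied to $\Fcal_\mu$ (only homogeneity and continuity are used), $E_{\lambda,\mu}$ is coercive and attains its infimum. Along $\sigma\ph$, Lemma~\ref{lem:fibering} gives
$$
\min_t E_{\lambda,\mu}(t\sigma\ph)=-C_{p,q}\frac{(\mu b)^{p/(p-q)}}{\big((1-\lambda/\lambda_1)\big)^{q/(p-q)}}.
$$
This tends to $-\infty$ as $\lambda\nearrow\lambda_1$, which proves the energy claim in~\ref{item:three-left}. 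It is below $\inf_{B_R}E_{\lambda,\mu}\ge$ (roughly) $m_0-1$ as soon as $\lambda_1-\lambda< c_*\mu^{p/q}$, by direct algebra on the exponents. The global minimizer $u_2$ then lies outside $B_R$, so $u_2\ne u_1$.

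\emph{Step 3: the mountain-pass points $u_3,u_4$.} This is the main obstacle. For $\lambda<\lambda_1$ we use the two strict separated minima $u_1$ and $u_2$. For $\lambda>\lambda_1$ we use $u_1$ and a point $t\sigma\ph$ with $t$ large, where $E_{\lambda,\mu}(t\sigma\ph)\to-\infty$ by Theorem~\ref{thm:geometry}\ref{item:geometry-supercritical}. In both cases $\partial B_R$, or a small sphere around $u_1$, separates the endpoints, and the Palais--Smale condition~\ref{thm:three:as2} gives a critical point at the level
$$
c=\inf_{\gamma}\max E_{\lambda,\mu}\circ\gamma \ge \inf_{\partial B_R}E_{\lambda,\mu}.
$$
When $u_1$ is not strict, one uses the standard refinement of the Mountain Pass theorem.

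The difficulty is to show $c<0$, which ensures $u_3,u_4\neq0$. For this we must build a path from $u_1$ to the other endpoint inside $\Acal_\mu$, along which $\max_{t>0}$ on each fibre is replaced by the fibre minimum. Concretely, take $v(s)\in\Acal_\mu$ joining $u_1$ to $\sigma\ph$ (or to $\pm u_1$ in the even case, using $E(-u)=E(u)$), and use the path $s\mapsto t(v(s))v(s)$ from Lemma~\ref{lem:fibering}, where $H_\lambda$ is positive. Where $H_\lambda\le0$ (only for $\lambda>\lambda_1$), we scale up to get negative energy. Each point then has negative energy, and by compactness the maximum is negative.

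When $q=1$, path-connectedness is not assumed. Here we expect $\Fcal_\mu$ to be convex-like (for instance linear), so that $\Acal_\mu$ is an open half-space cone and hence connected. Finally, $u_3\ne u_1,u_2$ and $u_4\ne u_1$ follow because the value $c$ exceeds the minimum levels, or by the standard alternative that infinitely many minimizers exist.
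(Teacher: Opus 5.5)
The barrier sphere, the interior minimizer $u_1$, and the global minimizer $u_2$ pushed outside $B_R$ by the fibre value along $\sigma\ph$ all follow the paper's proof. Steps~1--2 are fine up to bookkeeping: you must also cap $c_*\le\delta_*\mu_*^{-p/q}$ so that Step~1 applies in the region $\lambda_1-c_*\mu^{p/q}<\lambda<\lambda_1$. The genuine gaps are in Step~3, where proving that the minimax level is negative is the whole point.

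First, for $q=1$ you only \emph{expect} $\Fcal_\mu$ to be linear, so that case of the theorem is left unproved. Linearity is in fact forced by the hypotheses. Since $\Fcal_\mu\in C^1$ is positively $1$-homogeneous, $\Fcal_\mu(0)=0$ and $\Fcal_\mu(u)=\lim_{t\to0^+}\Fcal_\mu(tu)/t=\dual{\Fcal_\mu'(0)}{u}$ for every $u$. Hence $\Fcal_\mu$ is a continuous linear functional, nonzero because $\Fcal_\mu(\sigma\ph)=\mu b>0$, and $\Acal_\mu$ is an open half-space.

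Second, your path $s\mapsto t(v(s))v(s)$ is not continuous when $\lambda>\lambda_1$. By the Euler identity $H_\lambda(u_1)=q\Fcal_\mu(u_1)>0$, whereas $H_\lambda(\sigma\ph)=(\lambda_1-\lambda)/\lambda_1<0$. So $H_\lambda(v(s))$ vanishes somewhere on the path, and $t(v(s))\to+\infty$ there. ``Scaling up where $H_\lambda\le0$'' cannot be glued continuously onto it.

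The paper avoids the fibre minimizer altogether. On the compact path $\eta\subset\Acal_\mu$, set $a_\eta=\min_s\Fcal_\mu(\eta(s))>0$ and $d_\eta=\max_s H_\lambda(\eta(s))$. A single $\tau\in(0,1)$ with $\tau^{p-q}d_\eta/p<a_\eta$ then gives $E_{\lambda,\mu}(\tau\eta(s))<0$, whatever the sign of $H_\lambda$. The endpoints are reconnected by two pieces. One is the segment from $u_1$ to $\tau u_1$, on which $E_{\lambda,\mu}(tu_1)=\Fcal_\mu(u_1)(\frac qp t^p-t^q)<0$. The other is the ray from $\tau\sigma\ph$ to $T\sigma\ph$, negative by the explicit formula for $E_{\lambda,\mu}(t\sigma\ph)$. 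For $\lambda<\lambda_1$ your fibre-minimizer path does work, and even saves the connecting segments, since $H_\lambda>0$ off the origin and $t(u_i)=1$ at critical points.

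You also never justify $u_1,u_2\in\Acal_\mu$. This needs the Euler identity $E_{\lambda,\mu}(u)=-\frac{p-q}{p}\Fcal_\mu(u)$ at critical points, because for $\lambda>\lambda_1$ the inequality $E_{\lambda,\mu}(u_1)<0$ alone does not give $\Fcal_\mu(u_1)>0$.

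Finally, for $\lambda<\lambda_1$ the path must end at $u_2$, not at $\sigma\ph$. In the even alternative you must fix the signs of both $u_1$ and $u_2$, using that $E_{\lambda,\mu}$ is even so $-u_1$ and $-u_2$ remain local and global minimizers. You can then concatenate $u_1\to\sigma\ph\to u_2$ inside $\Acal_\mu$ before rescaling.
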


\begin{figure}[H]
\centering
\begin{tikzpicture}[x=1.35cm,y=1.05cm,>=Latex]
 \fill[gray!55]
  (3.6,0.45) .. controls (3.48,1.25) and (2.95,2.55) .. (2.25,3.25)
  -- (3.6,3.25) -- cycle;
 \fill[gray!25] (3.6,0.45) rectangle (5.35,3.25);
 \draw[thick,->] (1.75,0.45)--(5.85,0.45) node[right] {$\lambda$};
 \draw[thick,->] (1.95,0.25)--(1.95,3.75) node[above] {$\mu$};
 \draw[dashed] (3.6,0.45)--(3.6,3.55);
 \draw[thick]
  (3.6,0.45) .. controls (3.48,1.25) and (2.95,2.55) .. (2.25,3.25);
 \draw[densely dashed] (1.95,3.25)--(5.35,3.25);
 \draw[densely dashed] (5.35,0.45)--(5.35,3.25);
 \node[below] at (3.6,0.45) {$\lambda_1$};
 \node[below] at (5.35,0.45) {$\lambda_1+\delta_*$};
 \node[left] at (1.95,3.25) {$\mu_*$};
 \node at (3.15,2.9) {\large $\ge3$};
 \node at (4.48,2.9) {\large $\ge2$};
\end{tikzpicture}
\caption{Schematic plot of the $(\lambda,\mu)$-plane: darker gray -- at least three critical points, lighter gray -- at least two critical points.}
\label{fig:parameter-regions}
\end{figure}

\begin{proof}
By Theorem~\ref{thm:geometry}  \ref{item:geometry-resonant} \ref{item:geometry-supercritical-order} and Remark~\ref{rem:attain}, the unperturbed functional $E_{\lambda_1,0}$ has a nonzero global minimizer $u_0 \in \W$ at the level $m_0<0$, and
$$
 \liminf_{\|\nabla u\|_p\to+\infty}E_{\lambda_1,0}(u)\ge0.
$$
Let us choose a sufficiently large $R>\|\nabla u_0\|_p$ such that
\begin{equation}\label{eq:base-barrier}
 \inf_{\|\nabla u\|_p=R}E_{\lambda_1,0}(u)>\frac{m_0}{2}.
\end{equation}
Observe that Remark~\ref{rem:growth-euler} holds for $\widetilde{\Fcal}$, which gives the existence of $C_1>0$ such that $|\widetilde{\Fcal}(u)| \leq C_1$ for any $u$ with $\|\nabla u\|_p \leq R$.
Therefore, by the Poincar\'e inequality $H_{\lambda_1}(u) \geq 0$, we obtain the uniform estimate
\begin{equation}\label{eq:uniform-perturbation}
 |E_{\lambda,\mu}(u)-E_{\lambda_1,0}(u)|
 \le \frac{|\lambda-\lambda_1|}{p\lambda_1}R^p + C_1\mu
 \quad \text{for any $u$ such that}~ \|\nabla u\|_p \leq R.
\end{equation}
Thus, there exist $\mu_*\in(0,\mu_0]$ and $\delta_*\in(0,\delta_0]$, such that \eqref{eq:base-barrier} and \eqref{eq:uniform-perturbation} imply
\begin{equation}\label{eq:perturbed-barrier}
 \inf_{\|\nabla u\|_p=R}E_{\lambda,\mu}(u)
 >\inf_{\|\nabla u\|_p\le R}E_{\lambda,\mu}(u)
\end{equation}
whenever $\mu \in [0, \mu_*)$ and $\lambda \in (\lambda_1-\delta_*, \lambda_1+\delta_*)$. 
Since $E_{\lambda,\mu}$ is sequentially weakly lower semicontinuous, its infimum over the ball $\{\|\nabla u\|_p\le R\}$ is attained at an interior point $u_1$.
Hence, $u_1$ is a local minimizer and a critical point of $E_{\lambda,\mu}$.
By decreasing $\mu_*$ and $\delta_*$ if necessary, we further guarantee that $E_{\lambda,\mu}(u_1)<0$.
At every critical point $u$ of $E_{\lambda,\mu}$, the homogeneous Euler identity \eqref{eq:Euler-identity}, applied to $\Fcal_\mu$, gives
\begin{equation}\label{eq:perturbed-critical-identities}
 H_\lambda(u)=q\Fcal_\mu(u)
 \quad\text{and}\quad
 E_{\lambda,\mu}(u)=-\frac{p-q}{p}\Fcal_\mu(u).
\end{equation}
Thus, we have $u_1\in\Acal_\mu$.
In the even-alternative assumption on $\Acal_\mu$, both $u_1$ and $-u_1$ are local minimizers, and we fix the sign of $u_1$ so that it can be continuously joined to $\sigma\ph$ in $\Acal_\mu$.

\ref{item:three-left} Let $\mu \in (0,\mu_*)$ and $\lambda \in (\lambda_1-\delta_*,\lambda_1)$. 
Arguing as in the proof of Theorem~\ref{thm:geometry} \ref{item:geometry-subcritical}, 
it is not hard to see that $E_{\lambda,\mu}$ is bounded from below and coercive, which yields the existence of its global minimizer $u_2$. 
By the normalization \eqref{eq:phi-normalization} of $\ph$, \eqref{eq:resonance}, and \eqref{eq:imbalance}, we have 
$$
 H_\lambda(\sigma\ph)=\frac{\lambda_1-\lambda}{\lambda_1}
 \quad\text{and}\quad
 \Fcal_\mu(\sigma\ph)=\mu b.
$$
Therefore, the fibering formula \eqref{eq:fiber-value} implies that
\begin{equation}\label{eq:first-eigenspace-value}
 \min_{t>0}E_{\lambda,\mu}(t\sigma\ph)
 =-C_{p,q}(\mu b)^{p/(p-q)}
 \left(\frac{\lambda_1-\lambda}{\lambda_1}\right)^{-q/(p-q)}.
\end{equation}
Let us compare this value with $E_{\lambda,\mu}(u_1)$. 
Recalling that $E_{\lambda_1,0}(u)\ge m_0$ for every $u\in\W$,  \eqref{eq:uniform-perturbation} gives the uniform lower bound
\begin{equation}\label{eq:ball-lower-bound}
 E_{\lambda,\mu}(u)
 \ge m_0-\frac{\delta_*}{p\lambda_1}R^p-C_1\mu_*
 =:-M
 \quad\text{for any }u \text{ such that }\|\nabla u\|_p\le R,
\end{equation}
where $M>0$ is independent of $\mu \in (0,\mu_*)$ and $\lambda \in (\lambda_1-\delta_*,\lambda_1)$. 
Therefore, we see that 
\eqref{eq:first-eigenspace-value} is less than $-M$ provided
$\mu\in(0,\mu_*)$ and $\lambda\in(\lambda_1-c_*\mu^{p/q},\lambda_1)$, where 
$$
c_* = \min\left\{\frac{\delta_*}{\mu_*^{p/q}}, \lambda_1 b^{p/q} \left(\frac{C_{p,q}}{ M}\right)^{(p-q)/q}\right\}.
$$
Combining this fact with \eqref{eq:ball-lower-bound} and the global minimality of $u_2$, we obtain
\begin{equation}\label{eq:separated-minima}
 E_{\lambda,\mu}(u_2)
 \le\min_{t>0}E_{\lambda,\mu}(t\sigma\ph)
 <-M\le\inf_{\|\nabla u\|_p\le R}E_{\lambda,\mu}(u)
 =E_{\lambda,\mu}(u_1)<0.
\end{equation}
Consequently, we have $\|\nabla u_2\|_p>R$, and the global minimizer $u_2$ is distinct from the local minimizer $u_1$. 
Moreover, for a fixed $\mu$, we see from \eqref{eq:first-eigenspace-value} and \eqref{eq:separated-minima} that $E_{\lambda,\mu}(u_2) \to -\infty$ as $\lambda \nearrow \lambda_1$. 

For later purposes, we note that since $u_2$ is a critical point and $E_{\lambda,\mu}(u_2)<0$, \eqref{eq:perturbed-critical-identities} gives $u_2\in\Acal_\mu$.
Under the even-alternative assumption on $\Acal_\mu$, both $u_2$ and $-u_2$ are global minimizers, and we fix the sign of $u_2$ so that it can be continuously joined to $\sigma\ph$ in $\Acal_\mu$.

Let us now define
$$
 \Gamma_1=\{\gamma\in C([0,1],\W):~\gamma(0)=u_1, \gamma(1)=u_2\}
 \quad\text{and}\quad
 c_1=\inf_{\gamma\in\Gamma_1}\max_{t\in[0,1]}E_{\lambda,\mu}(\gamma(t)).
$$
By construction, every path in $\Gamma_1$ crosses the sphere $\{\|\nabla u\|_p=R\}$.
Thus, \eqref{eq:perturbed-barrier} and \eqref{eq:separated-minima} yield
\begin{equation}\label{eq:left-mountain-level}
 c_1\ge\inf_{\|\nabla u\|_p=R}E_{\lambda,\mu}(u)
 >E_{\lambda,\mu}(u_1)>E_{\lambda,\mu}(u_2)
\end{equation}
in the considered range of $\lambda$ and $\mu$. 
Let us justify that $c_1<0$. 
For this purpose, we construct a path in $\Gamma_1$ along which $E_{\lambda,\mu}$ is negative.
Note that if $q=1$, then the set $\Acal_\mu$ is path-connected.
Indeed, since $\Fcal_\mu$ is positively homogeneous and differentiable by our assumptions, we have $\Fcal_\mu(0)=0$ and 
\begin{equation}\label{eq:deriv1x}
 \Fcal_\mu(u)
 =\lim_{t \to 0^+}\frac{\Fcal_\mu(tu)-\Fcal_\mu(0)}t
 =\dual{\Fcal_\mu'(0)}{u}
 \quad\text{for any }u\in\W.
\end{equation}
Thus, $\Fcal_\mu$ is a continuous linear functional, and it is nonzero because $\Fcal_\mu(\sigma\ph)=\mu b>0$ by \eqref{eq:resonance} and \eqref{eq:imbalance}. 
Consequently, $\Acal_\mu$ is an open half-space and hence is convex and path-connected.

If $\Acal_\mu$ is assumed path-connected, then it contains a continuous path $\eta:[0,1]\to\Acal_\mu$ joining $u_1$ and $u_2$.
Otherwise, under the even-alternative assumption on $\Acal_\mu$, we concatenate the chosen paths from $u_1$ and $u_2$ to $\sigma\ph$ to obtain such a path $\eta$.
The continuity of $\Fcal_\mu$ and $H_\lambda$ on the compact set $\eta([0,1])$ gives
$$
 a_\eta:=\min_{t\in[0,1]}\Fcal_\mu(\eta(t))>0
 \quad\text{and}\quad
 d_\eta:=\max_{t\in[0,1]} H_\lambda(\eta(t))<+\infty.
$$
Now we choose $\tau\in(0,1)$ so small that $\tau^{p-q}d_\eta/p<a_\eta$.
Then the homogeneity implies that 
\begin{equation}\label{eq:scaled-negative-path}
 E_{\lambda,\mu}(\tau\eta(t))
 \le\tau^q\left(\frac{\tau^{p-q}}p d_\eta-a_\eta\right)<0
 \quad\text{for any }t\in[0,1].
\end{equation}
Moreover, if $u$ is a critical point with $E_{\lambda,\mu}(u)<0$, then \eqref{eq:perturbed-critical-identities} gives
\begin{equation}\label{eq:critical-scaling-negative}
 E_{\lambda,\mu}(tu)=\Fcal_\mu(u)
 \left(\frac qp t^p-t^q\right)<0,
 \quad\text{for any }t\in(0,1].
\end{equation}
Thus, concatenating the segment from $u_1$ to $\tau u_1$, the path $\tau\eta$, and the segment from $\tau u_2$ to $u_2$, we obtain a continuous path in $\Gamma_1$ along which $E_{\lambda,\mu}$ is negative.
Consequently, we get $c_1<0$.
The mountain-pass lemma and the Palais--Smale condition provide a critical point $u_3$ at the level $c_1$, see, e.g., \cite{Rabinowitz1986}. 
By \eqref{eq:left-mountain-level}, the critical points $u_1,u_2,u_3$ are distinct and nonzero. 
This proves \ref{item:three-left}.

\ref{item:two-right} 
Let $\mu \in (0,\mu_*)$ and $\lambda \in (\lambda_1, \lambda_1+\delta_*)$.
By \eqref{eq:phi-normalization}, \eqref{eq:resonance}, and \eqref{eq:imbalance}, we have
\begin{equation}\label{eq:right-eigenspace}
 E_{\lambda,\mu}(t\sigma\ph)
 =-\frac{\lambda-\lambda_1}{p\lambda_1}t^p-\mu b t^q<0
 \quad\text{for any }t>0.
\end{equation}
This expression tends to $-\infty$ as $t\to+\infty$.
Let us choose a sufficiently large $T>\max\{R,1\}$ such that
$$
 E_{\lambda,\mu}(T\sigma\ph)<E_{\lambda,\mu}(u_1).
$$
As above, we define
$$
 \Gamma_2=\{\gamma\in C([0,1],\W):~\gamma(0)=u_1,\ \gamma(1)=T\sigma\ph\}
 \quad \text{and} \quad
 c_2=\inf_{\gamma\in\Gamma_2}\max_{t\in[0,1]}E_{\lambda,\mu}(\gamma(t)).
$$
Since $\|\nabla u_1\|_p<R<T=\|\nabla (T\sigma\ph)\|_p$, every path in $\Gamma_2$ crosses the sphere $\{\|\nabla u\|_p=R\}$.
Consequently, \eqref{eq:perturbed-barrier} gives
\begin{equation}\label{eq:right-mountain-level}
 c_2\ge\inf_{\|\nabla u\|_p=R}E_{\lambda,\mu}(u)
 >E_{\lambda,\mu}(u_1).
\end{equation}
It remains to verify that $c_2<0$.
To this end, we argue in much the same way as above. 
The identity \eqref{eq:perturbed-critical-identities} and $E_{\lambda,\mu}(u_1)<0$ show that $u_1\in\Acal_\mu$, and \eqref{eq:right-eigenspace} gives $\sigma\ph\in\Acal_\mu$.
If $\Acal_\mu$ is path-connected, then it contains a continuous path $\eta:[0,1]\to\Acal_\mu$ joining $u_1$ and $\sigma\ph$.
Otherwise, under the even-alternative assumption on $\Acal_\mu$, we recall that the sign of $u_1$ was chosen above so that such a path $\eta$ exists.
As in \eqref{eq:scaled-negative-path}, we find $\tau\in(0,1)$ such that $E_{\lambda,\mu}(\tau\eta(t))<0$ for every $t\in[0,1]$.
Further using \eqref{eq:critical-scaling-negative}, we concatenate the segment from $u_1$ to $\tau u_1$, the path $\tau\eta$, and the segment from $\tau\sigma\ph$ to $T\sigma\ph$ to get a path $\gamma\in\Gamma_2$ along which $E_{\lambda,\mu}(\gamma(t))<0$ for every $t\in[0,1]$.
This gives $c_2<0$.
The mountain-pass lemma and the Palais--Smale condition provide a critical point $u_4$ at the level $c_2$.
By \eqref{eq:right-mountain-level}, $u_4$ and $u_1$ are distinct and nonzero. 
This proves \ref{item:two-right}.
\end{proof}

\begin{remark}\label{rem:path-connectedness}
The assumptions on $\Acal_\mu$ in Theorem~\ref{thm:three} guarantee that the constructed minimax levels are negative.
Without such assumptions, the mountain-pass lemma still gives a critical point, but the argument does not exclude the zero critical point, since $E_{\lambda,\mu}'(0)=0$ when $q>1$.
\end{remark}

\begin{remark}\label{rem:S-shaped}
For each fixed sufficiently small $\mu>0$, Theorem~\ref{thm:three} indicates that the obtained critical points of $E_{\lambda,\mu}$ might form an $S$-shaped bifurcation diagram in the $(\lambda,\|\nabla u\|_p)$-plane, see Figure~\ref{fig:S-shaped}. 
This is supported by the results from \cite{BobkovTanaka2022Multiplicity,BobkovTanaka2023Indefinite}. 
However, we do not investigate the actual continuity of the corresponding branches in the present work. 
We also refer to \cite{CarvalhoIlyasovSantos2022,KormanLi1999,WangYeh2008} for results on the $S$-shaped bifurcation diagrams for particular problems. 
For general three-critical-points theorems, see, e.g., \cite{AvernaBonanno2003,Ricceri2000}.
\end{remark}

\begin{figure}[!htb]
\centering
\begin{tikzpicture}[x=1.25cm,y=0.9cm,>=Latex]
 \draw[thick,->] (0.55,0.45)--(5.75,0.45) node[right] {$\lambda$};
 \draw[thick,->] (0.75,0.25)--(0.75,5.7) node[right] {$\|\nabla u\|_p$};
 \draw[dashed] (3.8,0.45)--(3.8,5.45);
 \draw[densely dotted] (3.15,0.45)--(3.15,4.7);
 \draw[densely dotted] (4.45,0.45)--(4.45,3.);
 \draw[very thick,->]
  (2.05,0.8)
  .. controls (3.25,0.82) and (5.05,1.15) .. (5.05,2.0)
  .. controls (5.05,2.75) and (2.25,2.75) .. (2.25,3.55)
  .. controls (2.25,4.25) and (3.65,4.35) .. (3.73,5.35);
 \node[below] at (3.8,0.45) {$\lambda_1$};
 \node[align=center] at (2.8,5.15) {\small unbounded\\[-0.2em]\small branch};
 \node at (2.45,2.2) {\small $3$ solutions};
 \node at (4.85,3.25) {\small $2$ solutions};
\end{tikzpicture}
\caption{A schematic $S$-shaped bifurcation diagram in a neighborhood of $\lambda_1$.}
\label{fig:S-shaped}
\end{figure}

\section{Sufficient conditions}\label{sec:verification}

In this section, we give sufficient conditions for the validity of Assumptions~\ref{ass:upper} and~\ref{ass:approach}. 
To this end, we recall the improved Poincar\'e (Friedrichs) inequality of Fleckinger-Pell\'e \& Tak\'a\v{c} \cite{FleckingerTakac2002}, which is used with related purposes in \cite{BobkovTanaka2018,BobkovTanaka2023Indefinite,Takac2002,Takac2006}.
Let us introduce the following additional assumption on the domain. 

\begin{assumption}\label{ass:domain-regularity}
If $p>2$, then $\Om$ is of class $C^{1,\alpha}$ for some $\alpha\in(0,1)$.
\end{assumption}

Every $u\in\W$ has a unique decomposition
\begin{equation}\label{eq:decomposition}
 u=t\ph+v,
 \quad 
 \text{where}~ t=\frac{\int_\Om\ph^{p-1}u\,dx}{\int_\Om\ph^p\,dx}
 \quad \text{and} \quad 
 \int_\Om\ph^{p-1}v\,dx=0.
\end{equation}
For $p>2$, define
\begin{equation}\label{eq:weighted-norm}
 \|v\|_{\ph}^2
 =\int_\Om|\nabla\ph|^{p-2}|\nabla v|^2\,dx, 
\end{equation}
and denote by $D_{\ph}$ the completion of $W_0^{1,p}(\Om)$ with respect to this norm. 
In the case $p=2$, we set $\|v\|_{\ph}=\|\nabla v\|_2$ and identify $D_{\ph}$ with $W_0^{1,2}(\Omega)$.

For $p=2$, the decomposition \eqref{eq:decomposition} can be used to get the following improved Poincar\'e inequality:
\begin{equation}\label{eq:improved-poincare0}
 H_{\lambda_1}(t\ph+v)
 \ge \frac{\lambda_2-\lambda_1}{\lambda_2} \|\nabla v\|_2^2,
\end{equation}
where $\lambda_2$ is the second eigenvalue of the Laplacian in $\Omega$, see, e.g., \cite{BobkovKolonitskii2023,FleckingerTakac2002}.

For $p>2$ and under Assumption~\ref{ass:domain-regularity}, the improved Poincar\'e  inequality \cite{FleckingerTakac2002} in the form of \cite[Theorem~1.1]{BobkovKolonitskii2023} states the existence of a constant $C_P>0$ such that, for the decomposition \eqref{eq:decomposition},
\begin{equation}\label{eq:improved-poincare}
 H_{\lambda_1}(t\ph+v)
 \ge C_P\left(|t|^{p-2}\|v\|_{\ph}^2
 +\|\nabla v\|_p^p\right).
\end{equation}
Under the same assumptions, the continuous embedding of $D_{\ph}$ into $L^2(\Om)$ follows from \cite[Lemma~2.2\,\textup{(i)}]{BobkovKolonitskii2023}, implying 
\begin{equation}\label{eq:weighted-L2}
 \|v\|_2\le C\|v\|_{\ph}
 \quad\text{for any }v\in W_0^{1,p}(\Om).
\end{equation}

\begin{proposition}\label{prop:upper-criterion}
Let $p \geq 2$ and let Assumptions \ref{ass:resonance} and \ref{ass:domain-regularity} hold.
Assume that there exist $\kappa, C,\delta>0$ such that
\begin{equation}\label{eq:upper-A}
 \Fcal(t\sigma\ph+v) 
 \le C\left(\|v\|_{\ph}^2+\|\nabla v\|_p^p\right)^\kappa
\end{equation}
whenever $\sigma\in\{-1,1\}$, $|t-1|<\delta$, and $v \in \W$ satisfies $\int_\Om\ph^{p-1}v\,dx=0$ and $\|\nabla v\|_p<\delta$.
Then Assumption~\ref{ass:upper} holds for $\kappa$. 

For the validity of \eqref{eq:upper-A}, it is sufficient to assume that $\Fcal$ is continuously differentiable in a neighborhood of $\{\ph,-\ph\}$ and that
\begin{equation}\label{eq:derivative-A}
 \dual{\Fcal'(t\sigma\ph+\theta v)}{v}
 \le C\left(\|v\|_{\ph}^2+\|\nabla v\|_p^p\right)^\kappa,
 \quad 0\le\theta\le1,
\end{equation}
uniformly for $t,\sigma,v$ as above.
\end{proposition}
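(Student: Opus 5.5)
We first show that \eqref{eq:upper-A} implies Assumption~\ref{ass:upper}, and then that \eqref{eq:derivative-A} implies \eqref{eq:upper-A}.

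\emph{Step 1: decomposition near $\pm\ph$.} Take $w\in S$ with $\|\nabla(w-\sigma\ph)\|_p<\delta'$, where $\delta'$ will be fixed below. Decompose $w=s\ph+v$ as in \eqref{eq:decomposition}. Set $t:=\sigma s$, so that $w=t\sigma\ph+v$. The map $u\mapsto \int_\Om\ph^{p-1}u\,dx/\int_\Om\ph^p\,dx$ is a continuous linear functional on $\W$, and $v=u-s\ph$ depends continuously and linearly on $u$. Since $\sigma\ph$ has coefficient $\sigma$ and zero orthogonal part, we get
$$
|t-1|\le C\|\nabla(w-\sigma\ph)\|_p
\quad\text{and}\quad
\|\nabla v\|_p\le C\|\nabla(w-\sigma\ph)\|_p .
$$
Choosing $\delta'$ small makes $|t-1|<\delta$ and $\|\nabla v\|_p<\delta$. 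Hypothesis \eqref{eq:upper-A} then gives
$$
\Fcal(w)\le C\left(\|v\|_{\ph}^2+\|\nabla v\|_p^p\right)^\kappa .
$$

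\emph{Step 2: comparison with $H_{\lambda_1}$.} For $p>2$, apply the improved Poincar\'e inequality \eqref{eq:improved-poincare}. Since $|t|\ge 1-\delta\ge 1/2$ (after shrinking $\delta$ if necessary), we have $|t|^{p-2}\ge 2^{2-p}$, and hence
$$
\|v\|_{\ph}^2+\|\nabla v\|_p^p\le C\, H_{\lambda_1}(w).
$$
For $p=2$, the quantity is $2\|\nabla v\|_2^2$, and \eqref{eq:improved-poincare0} gives the same bound. Raising to the power $\kappa$ yields $\Fcal(w)\le C H_{\lambda_1}(w)^\kappa$, which is Assumption~\ref{ass:upper} with $\delta$ replaced by $\delta'$. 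The only delicate point is keeping $t$ bounded away from $0$, which is exactly what restricting to a neighbourhood of $\pm\ph$ provides.

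\emph{Step 3: the sufficient condition \eqref{eq:derivative-A}.} Shrink $\delta$ so that the segment $\{t\sigma\ph+\theta v:\theta\in[0,1]\}$ lies in the neighbourhood where $\Fcal$ is $C^1$. The function $g(\theta)=\Fcal(t\sigma\ph+\theta v)$ is then $C^1$ on $[0,1]$ with $g'(\theta)=\dual{\Fcal'(t\sigma\ph+\theta v)}{v}$. By the mean value theorem,
$$
\Fcal(t\sigma\ph+v)=\Fcal(t\sigma\ph)+g'(\theta_0)
$$
for some $\theta_0\in(0,1)$. By homogeneity \eqref{eq:F-homogeneous} and the resonance condition \eqref{eq:resonance}, $\Fcal(t\sigma\ph)=t^q\Fcal(\sigma\ph)=0$ for $t>0$. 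Hence \eqref{eq:derivative-A} at $\theta=\theta_0$ gives \eqref{eq:upper-A} directly.

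The main point needing care is Step~2, specifically the positivity and lower bound on $t$. Steps~1 and~3 are routine continuity and mean-value arguments.
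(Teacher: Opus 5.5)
Your proof is correct and follows the paper's argument essentially step by step. You decompose $w=t\sigma\ph+v$ and control $|t-1|$ and $\|\nabla v\|_p$ by $\|\nabla(w-\sigma\ph)\|_p$. You then use $|t|>1/2$ in the improved Poincar\'e inequality (or \eqref{eq:improved-poincare0} for $p=2$), and for the sufficient condition you use $\Fcal\equiv0$ on $\R\ph$ plus one-variable calculus along the segment. The differences are cosmetic: you bound the projection by abstract continuity rather than the paper's explicit H\"older--Poincar\'e estimate $|t-1|\le\|\nabla(w-\sigma\ph)\|_p$, and you use the mean value theorem where the paper integrates $\dual{\Fcal'(t\sigma\ph+\theta v)}{v}$ over $\theta\in[0,1]$.
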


\begin{proof}
Let us decrease $\delta$ so that $\delta\le1/2$.
Let $w\in S$ satisfy $\|\nabla(w-\sigma\ph)\|_p<\delta_1$ for some $\sigma\in\{-1,1\}$ and  $\delta_1 \in (0,\delta/2)$.
In the decomposition $w=t\sigma\ph+v$, \eqref{eq:phi-normalization} and \eqref{eq:decomposition} give
$$
 t=\sigma\lambda_1\int_\Om\ph^{p-1}w\,dx
 \quad\text{and}\quad
 \int_\Om\ph^{p-1}v\,dx=0.
$$
By the H\"older and Poincar\'e inequalities, we have
$$
 |t-1|\le\lambda_1\|\ph\|_p^{p-1}\|w-\sigma\ph\|_p
 \le\|\nabla(w-\sigma\ph)\|_p<\delta_1.
$$
Consequently, $t>1/2$ and $\|\nabla v\|_p\le\|\nabla(w-\sigma\ph)\|_p+|t-1|<2\delta_1<\delta$, so that \eqref{eq:upper-A} applies to $v$.
Moreover, \eqref{eq:improved-poincare} for $p>2$ and \eqref{eq:improved-poincare0} for $p=2$ give
\begin{equation}\label{eq:h1ower}
 H_{\lambda_1}(w)\ge C_0\left(\|v\|_{\ph}^2+\|\nabla v\|_p^p\right),
\end{equation}
where $C_0=C_P2^{2-p}$ if $p>2$, and $C_0=(\lambda_2-\lambda_1)/(2\lambda_2)$ if $p=2$.
Combining \eqref{eq:h1ower} with \eqref{eq:upper-A}, we obtain
$$
 \Fcal(w)\le C C_0^{-\kappa}H_{\lambda_1}(w)^\kappa,
$$
which proves Assumption~\ref{ass:upper}.

Finally, if $\Fcal$ is continuously differentiable in a neighborhood of $\{\ph,-\ph\}$, then, after decreasing $\delta$ if necessary, the segment $t\sigma\ph+\theta v$, $0\le\theta\le1$, lies in this neighborhood.
Then the fundamental theorem of calculus gives
$$
 \Fcal(t\sigma\ph+v) = \Fcal(t\sigma\ph+v)-\Fcal(t\sigma\ph)
 =\int_0^1\dual{\Fcal'(t\sigma\ph+\theta v)}{v}\,d\theta.
$$
Using the estimate~\eqref{eq:derivative-A}, we derive \eqref{eq:upper-A}.
\end{proof}

The following result verifies Assumption~\ref{ass:approach} with $\kappa=1/2$ when $\Fcal$ has a nonzero derivative at $\sigma \ph$.

\begin{proposition}\label{prop:approach-criterion}
Let $p\ge2$ and let Assumption~\ref{ass:resonance} hold.
Suppose that $\Fcal$ is Fr\'echet differentiable at $\sigma\ph$ for some $\sigma\in\{-1,1\}$ and that there exists $z\in\W$ such that
$\dual{\Fcal'(\sigma\ph)}{z}>0$.
Then Assumption~\ref{ass:approach} is satisfied with $\kappa=1/2$.
\end{proposition}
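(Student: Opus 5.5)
We construct $w_\eps$ as a normalized perturbation of $\sigma\ph$ in a direction along which $\Fcal$ increases. First we adjust the direction $z$. Using the decomposition \eqref{eq:decomposition}, write $z=s\ph+v_0$ with $\int_\Om\ph^{p-1}v_0\,dx=0$. Since $\Fcal(t\sigma\ph)=0$ for all $t\ge0$ by \eqref{eq:resonance} and homogeneity, differentiating in $t$ at $t=1$ gives $\dual{\Fcal'(\sigma\ph)}{\ph}=0$. Hence $\dual{\Fcal'(\sigma\ph)}{v_0}=\dual{\Fcal'(\sigma\ph)}{z}=:a>0$, and in particular $v_0\neq0$. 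We may therefore assume from now on that $z=v_0$ satisfies the orthogonality condition.

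For small $s>0$ consider $u_s=\sigma\ph+s z$ and $w_s=u_s/\|\nabla u_s\|_p\in S$. By Fréchet differentiability at $\sigma\ph$ and $\Fcal(\sigma\ph)=0$,
$$
\Fcal(u_s)=s a+o(s).
$$
Since $\|\nabla u_s\|_p\to1$, homogeneity gives $\Fcal(w_s)\ge sa/2$ for all small $s$.

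Next we bound $H_{\lambda_1}$ from above. The map $\Phi(s)=H_{\lambda_1}(\sigma\ph+sz)$ is $C^1$ because $p\ge2$. It satisfies $\Phi(0)=0$, and $\Phi'(0)=0$ because $\sigma\ph$ is a critical point of $H_{\lambda_1}$ (the first eigenfunction equation). For $p\ge2$, the functions $\xi\mapsto|\xi|^p$ and $r\mapsto|r|^p$ have locally Lipschitz derivatives, so $\Phi$ is $C^{1,1}$ near $0$, and we expect $\Phi(s)\le Cs^2$. The step I expect to be the main obstacle is justifying this second-order bound without a pointwise Lipschitz estimate, since $\nabla z$ need not be bounded. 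I would use the elementary inequality
$$
\bigl||a+b|^p-|a|^p-p|a|^{p-2}a\cdot b\bigr|\le C_p\bigl(|a|^{p-2}|b|^2+|b|^p\bigr), \qquad p\ge2,
$$
together with the fact that $\nabla\ph\in L^\infty$ and $\ph\in L^\infty$. Boundedness of $\nabla\ph$ follows from Assumption~\ref{ass:domain-regularity}, or from a weaker requirement such as $\nabla\ph\in L^p$ combined with Hölder's inequality. With this inequality,
$$
\Phi(s)\le C\bigl(s^2\|\nabla z\|_p^2+s^p\|\nabla z\|_p^p\bigr)\le Cs^2 ,
$$
using the Hölder estimate $\int|\nabla\ph|^{p-2}|\nabla z|^2\le\|\nabla\ph\|_p^{p-2}\|\nabla z\|_p^2$, which avoids needing $L^\infty$ at all. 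Dividing by $\|\nabla u_s\|_p^p\ge1/2$ gives $H_{\lambda_1}(w_s)\le Cs^2$.

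We also need $H_{\lambda_1}(w_s)>0$. This holds because $u_s\notin\R\ph$: indeed $z\ne0$ and $z$ is orthogonal to $\ph^{p-1}$, so the equality case of the Poincaré inequality is excluded.

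Finally set $s=\sqrt\eps$ and $w_\eps=w_{\sqrt\eps}$. Then for $\eps<\eps_0$ we obtain $0<H_{\lambda_1}(w_\eps)\le C\eps$ and $\Fcal(w_\eps)\ge (a/2)\eps^{1/2}$. This is \eqref{eq:approach-family} with $\kappa=1/2$, after enlarging the constant $C$.
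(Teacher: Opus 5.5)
Your proposal is correct and follows the paper's proof: you perturb $\sigma\ph$ by $\tau z$, get $\Fcal\ge d\tau/2$ from Fr\'echet differentiability, bound $H_{\lambda_1}\le C\tau^2$ via the second-order inequality for $|\cdot|^p$ ($p\ge2$) together with H\"older against $\|\nabla\ph\|_p=1$, then normalize and take $\tau=\sqrt\eps$. The differences are cosmetic. Projecting $z$ onto $\{\int_\Om\ph^{p-1}v\,dx=0\}$ is unnecessary, since $z\notin\R\ph$ already follows from $\Fcal'(\sigma\ph)$ vanishing on $\R\ph$. You should also drop the aside about $\nabla\ph\in L^\infty$ via Assumption~\ref{ass:domain-regularity}: that assumption is not part of the statement, and, as you note yourself, H\"older makes it superfluous.
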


\begin{proof}
Set $d=\dual{\Fcal'(\sigma\ph)}{z}>0$ and $v_\tau=\sigma\ph+\tau z$ for $\tau>0$.
By Assumption~\ref{ass:resonance}, $\Fcal$ vanishes on $\R\ph$, and hence its derivative at $\sigma\ph$ vanishes in every direction belonging to $\R\ph$.
This implies that $z\notin\R\ph$, and hence $v_\tau\notin\R\ph$ and $H_{\lambda_1}(v_\tau)>0$ for every $\tau>0$.

For $p\ge2$, the following estimate is well known: there exists $C_p>0$ such that
$$
 0\le |a+b|^p-|a|^p-p|a|^{p-2}a\cdot b
 \le C_p\left(|a|^{p-2}|b|^2+|b|^p\right),
 \quad a,b\in\R^N.
$$
Indeed, the lower bound follows from the convexity of $a\mapsto|a|^p$, and the upper bound follows from Taylor's formula with integral remainder and the bound $|D^2(|a|^p)|\le p(p-1)|a|^{p-2}$. 
Applying these estimates with $a=\sigma\nabla\ph$, $b=\tau\nabla z$, and $a=\sigma \ph$, $b=\tau z$, and noting that $H_{\lambda_1}(\sigma\ph)=0$ and $H_{\lambda_1}'(\sigma\ph)=0$, we obtain
\begin{align}
 0<H_{\lambda_1}(v_\tau)
 &=
 H_{\lambda_1}(\sigma\ph+\tau z)
 -
 H_{\lambda_1}(\sigma \ph)
 -
 \left<H_{\lambda_1}'(\sigma \ph), \tau z \right>\\ 
 &\le C_p\left(\tau^2\int_\Om|\nabla\ph|^{p-2}|\nabla z|^2\,dx
 +\tau^p\|\nabla z\|_p^p\right)\\
 \label{eq:lasteqh}
 &\le C_p\left(\tau^2\|\nabla z\|_p^2+\tau^p\|\nabla z\|_p^p\right)
 \le C_z\tau^2,\quad 0<\tau\le1,
\end{align}
where $C_z>0$ depends on $z$, and the third inequality follows from the H\"older inequality and $\|\nabla\ph\|_p=1$ (see, equivalently, \cite[Eq.~(2.9)]{BobkovKolonitskii2023}).
On the other hand, the assumed differentiability of $\Fcal$ at $\sigma\ph$ and the equality $\Fcal(\sigma \ph)=0$ give $\Fcal(v_\tau)=d\tau+o(\tau)$.
Thus, there exists $\tau_0\in(0,1]$ such that
\begin{equation}\label{eq:lasteqf}
 \Fcal(v_\tau)\ge\frac d2\tau
 \quad\text{and}\quad
 \frac12\le\|\nabla v_\tau\|_p\le2
 \quad \text{for any}~ 0<\tau<\tau_0.
\end{equation}
For $\eps \in (0,\tau_0^2)$, define
$w_\eps=v_{\sqrt\eps}/\|\nabla v_{\sqrt\eps}\|_p$, so that $w_\eps \in S$.
By the homogeneity of $H_{\lambda_1}$ and $\Fcal$, we deduce from \eqref{eq:lasteqh} and \eqref{eq:lasteqf} that 
$$
 0<H_{\lambda_1}(w_\eps)\le2^p C_z\eps
 \quad\text{and}\quad
 \Fcal(w_\eps)\ge\frac{d}{2^{q+1}}\eps^{1/2},
$$
which is required in Assumption~\ref{ass:approach} with $\kappa=1/2$.
\end{proof}

\begin{corollary}\label{cor:linear-source}
Let $p\ge2$ and let Assumption~\ref{ass:domain-regularity} hold.
If $f\in D_{\ph}^*\setminus\{0\}$ satisfies $f[\ph]=0$, then the functional $\Fcal(u)=f[u]$, $u\in\W$, satisfies Assumption~\ref{ass:resonance} with $q=1$, Assumption~\ref{ass:upper} with any $\kappa>0$, and Assumption~\ref{ass:approach} with $\kappa=1/2$.
\end{corollary}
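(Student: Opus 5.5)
The plan is to verify the three assumptions one at a time, using the improved Poincaré inequalities \eqref{eq:improved-poincare0} and \eqref{eq:improved-poincare}, Proposition~\ref{prop:upper-criterion} and Proposition~\ref{prop:approach-criterion}. I expect the upper bound to go through only for $\kappa\in(0,1/2]$; for $\kappa>1/2$ I will argue that it actually fails.

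\textbf{Assumption~\ref{ass:resonance}.} Since $f\in D_{\ph}^*$ and $W_0^{1,p}(\Om)\hookrightarrow D_{\ph}$ continuously, the functional $\Fcal(u)=f[u]$ is a continuous linear functional on $\W$. Indeed, by H\"older and $\|\nabla\ph\|_p=1$,
\[
\|u\|_{\ph}^2=\int_\Om|\nabla\ph|^{p-2}|\nabla u|^2\,dx\le\|\nabla u\|_p^2 .
\]
Hence $\Fcal$ is positively $1$-homogeneous, and $f[\ph]=0$ gives \eqref{eq:resonance} with $q=1$.

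\textbf{Assumption~\ref{ass:approach}.} Here I apply Proposition~\ref{prop:approach-criterion}. The functional $\Fcal$ is linear, so $\Fcal'(\sigma\ph)=f$. Since $f\neq0$ and $\W$ is dense in $D_{\ph}$, there is $z\in\W$ with $f[z]\neq0$. Replacing $z$ by $-z$ if needed gives $\dual{\Fcal'(\sigma\ph)}{z}>0$, so the proposition yields Assumption~\ref{ass:approach} with $\kappa=1/2$.

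\textbf{Assumption~\ref{ass:upper}.} I use Proposition~\ref{prop:upper-criterion}. For $w=t\sigma\ph+v$ as in \eqref{eq:decomposition}, linearity and $f[\ph]=0$ give
\[
\Fcal(t\sigma\ph+v)=f[v]\le\|f\|_{D_{\ph}^*}\|v\|_{\ph}\le C\left(\|v\|_{\ph}^2+\|\nabla v\|_p^p\right)^{1/2}.
\]
This is \eqref{eq:upper-A} with $\kappa=1/2$. Moreover, $H_{\lambda_1}\le1$ on $S$, so $H_{\lambda_1}(w)^{1/2}\le H_{\lambda_1}(w)^{\kappa}$ whenever $\kappa\le1/2$. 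Therefore Assumption~\ref{ass:upper} holds for every $\kappa\in(0,1/2]$.

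\textbf{The main obstacle is $\kappa>1/2$, and there the claim appears false.} Take the family $w_\eps$ built in the proof of Proposition~\ref{prop:approach-criterion}. It satisfies $\|\nabla(w_\eps-\sigma\ph)\|_p\to0$, $H_{\lambda_1}(w_\eps)\le C\eps$ and $\Fcal(w_\eps)\ge c\,\eps^{1/2}$. If \eqref{eq:upper-local} held for some $\kappa>1/2$, we would get
\[
c\,\eps^{1/2}\le C\eps^{\kappa},
\]
which is impossible as $\eps\to0$. So no sign-based or other improvement can rescue the bound $\Fcal\le CH_{\lambda_1}^\kappa$ near $\pm\ph$ for $\kappa>1/2$.

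I would therefore write the proof for $\kappa\in(0,1/2]$, together with the counterexample for $\kappa>1/2$. I would also flag that ``any $\kappa>0$'' in the statement should presumably read ``any $\kappa\in(0,1/2]$''. This corrected range is consistent with Remark~\ref{rem:borderline}, which uses exactly $\kappa=1/2$.
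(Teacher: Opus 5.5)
Your proof is correct, and for the valid range it follows the paper's route. For Assumption~\ref{ass:resonance} you use the H\"older bound $\|v\|_{\ph}\le\|\nabla v\|_p$. For the upper bound you feed $f[v]\le\|f\|_{D_{\ph}^*}\|v\|_{\ph}$ into Proposition~\ref{prop:upper-criterion}. For Assumption~\ref{ass:approach} you combine density of $\W$ in $D_{\ph}$ with Proposition~\ref{prop:approach-criterion}.

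Your objection to ``any $\kappa>0$'' is also correct, and it points to a real error in both the statement and the paper's proof. The paper derives the claim from $\|v\|_{\ph}\le(\|v\|_{\ph}^2+\|\nabla v\|_p^p)^{\kappa}$. For small $\delta$ the bracket is at most $1$, so this holds for $\kappa\le1/2$ but fails for $\kappa>1/2$ as $v\to0$.

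Your family $w_\eps\to\sigma\ph$ shows that Assumption~\ref{ass:upper} genuinely fails for $\kappa>1/2$. A slightly more direct route is Remark~\ref{rem:global-upper} together with Assumption~\ref{ass:approach} at $\kappa=1/2$, which would give $\eps^{1/2}\lesssim\eps^{\kappa}$.

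The statement should therefore read $\kappa\in(0,1/2]$. Nothing downstream is affected, since Proposition~\ref{prop:fredholm} and Remark~\ref{rem:borderline} only use $\kappa=1/2$.
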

\begin{proof}
For $p>2$, the H\"older inequality and \eqref{eq:phi-normalization} give
\begin{equation}\label{eq:vphi1}
 \|v\|_{\ph}^2\le\|\nabla\ph\|_p^{p-2}\|\nabla v\|_p^2=\|\nabla v\|_p^2
 \quad\text{for any }v\in\W.
\end{equation}
For $p=2$, \eqref{eq:vphi1} holds with equality by the definition of $\|v\|_{\ph}$.
Thus, $f$ restricts to a continuous linear functional on $\W$, and $f[\ph]=0$ implies that $\Fcal$ satisfies Assumption~\ref{ass:resonance} with $q=1$.
For the decomposition and parameters in Proposition~\ref{prop:upper-criterion}, we have
$$
 \Fcal(t\sigma\ph+v)=f[v]
 \le\|f\|_{D_{\ph}^*}\|v\|_{\ph}
 \le\|f\|_{D_{\ph}^*}\left(\|v\|_{\ph}^2+\|\nabla v\|_p^p\right)^{\kappa}
$$
for any $\kappa>0$. 
Hence, Proposition~\ref{prop:upper-criterion} verifies Assumption~\ref{ass:upper} with such $\kappa$.
Since $f\ne0$ and $\W$ is dense in $D_{\ph}$, there exists $z\in\W$ such that $f[z]>0$.
The derivative of $\Fcal$ at $\ph$ is $f$, so Proposition~\ref{prop:approach-criterion} verifies Assumption~\ref{ass:approach} with $\kappa=1/2$.
\end{proof}

\section{Applications}\label{sec:applications}
In this section, we discuss applications of Theorems~\ref{thm:geometry} and \ref{thm:three} to double-phase functionals and the nonlinear Fredholm alternative. 

\subsection{Double-phase functionals}\label{sec:double-phase}
For $q>1$ and $a,c\in L^\infty(\Om)$ with $c\ge0$, consider the functional
\begin{equation}\label{eq:model-F}
\Fcal(u) =
\frac1q \int_\Om \bigl( a(x)|u|^q-c(x)|\nabla u|^q\bigr)\,dx,
\end{equation}
so that the corresponding energy functional $E_{\lambda}$ has the form
$$
E_{\lambda}(u)
=
 \int_\Om\left(\frac1p|\nabla u|^p
 +\frac{c(x)}q|\nabla u|^q\right)dx
 -
 \int_\Om\left(\frac{\lambda}{p}|u|^p
  +\frac{a(x)}q|u|^q\right)dx,
$$
and its critical points are (weak) solutions of
\begin{equation}\label{eq:double1}
 -\operatorname{div}\bigl(|\nabla u|^{p-2}\nabla u
 +c(x)|\nabla u|^{q-2}\nabla u\bigr)
 =\lambda|u|^{p-2}u+a(x)|u|^{q-2}u
 \quad\text{in }\Om.
\end{equation}
The operator on the left-hand side of \eqref{eq:double1} is of double-phase type; see, for example, \cite{ColomboMingione2015,Zhikov1987}. 
The case $c\equiv0$ corresponds to the resonant indefinite functional considered in \cite{BobkovTanaka2023Indefinite}.
The case $c\equiv1$ and $a\equiv\beta_*$, where $\beta_*=\|\nabla\ph\|_q^q/\|\ph\|_q^q$, corresponds to the resonant $(p,q)$-Laplacian functional studied in \cite{BobkovTanaka2018,BobkovTanaka2022Multiplicity}.

\begin{lemma}\label{lem:model-upper}
Let $p\ge2q$, $q>1$, $a,c\in L^\infty(\Om)$ with $c\ge0$, and let Assumptions~\ref{ass:resonance} and \ref{ass:domain-regularity} hold.
Then $\Fcal \in C^1(\W,\mathbb{R})$, $\Fcal$ is sequentially weakly upper semicontinuous, and $\Fcal$ satisfies Assumption~\ref{ass:upper} with $\kappa=1/2$.
\end{lemma}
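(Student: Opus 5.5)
The plan is to dispose of the two soft properties first and then spend most of the effort on Assumption~\ref{ass:upper}, which I would obtain from the derivative criterion of Proposition~\ref{prop:upper-criterion}.

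\emph{Step 1: $C^1$ regularity.} Write $\Fcal=\frac1q\int a|u|^q-\frac1q\int c|\nabla u|^q$. Since $q>1$, both $s\mapsto|s|^q$ and $\xi\mapsto|\xi|^q$ are $C^1$, and $q<p$. Hence both integrals are $C^1$ Nemytskii functionals on $L^p$ and on $(L^p)^N$, respectively. Composing with the bounded linear maps $u\mapsto u$ and $u\mapsto\nabla u$ from $\W$ gives $\Fcal\in C^1(\W,\R)$, with
$$
\dual{\Fcal'(u)}{v}=\int_\Om a|u|^{q-2}uv\,dx-\int_\Om c|\nabla u|^{q-2}\nabla u\cdot\nabla v\,dx.
$$

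\emph{Step 2: weak upper semicontinuity.} If $u_n\weakto u$ in $\W$, then Rellich gives $u_n\to u$ in $L^q$, so the $a$-term converges. The map $u\mapsto\int c|\nabla u|^q$ is convex and continuous, hence weakly lower semicontinuous. Its negative is therefore weakly upper semicontinuous, and so is $\Fcal$.

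\emph{Step 3: Assumption~\ref{ass:upper} with $\kappa=1/2$.} I would verify \eqref{eq:derivative-A}. Fix $\sigma$, $|t-1|<\delta$, and $v$ with $\int\ph^{p-1}v=0$ and $\|\nabla v\|_p<\delta$. Set $u_\theta=t\sigma\ph+\theta v$. By Assumption~\ref{ass:resonance} and homogeneity, $\Fcal$ vanishes on $\R\ph$, so $\dual{\Fcal'(t\sigma\ph)}{v}$ must be controlled separately; in fact $\Fcal'(\sigma\ph)=0$ on all directions is not automatic. The key point is rather to bound the difference
$$
\dual{\Fcal'(u_\theta)}{v}-\dual{\Fcal'(t\sigma\ph)}{v}
$$
together with the leading term $\dual{\Fcal'(t\sigma\ph)}{v}=t^{q-1}\dual{\Fcal'(\sigma\ph)}{v}$. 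For the leading term, I expect to need that $\Fcal'(\sigma\ph)$ is bounded in the $D_{\ph}$-norm. The $a$-part gives $\int a\ph^{q-1}v\le C\|v\|_2\le C\|v\|_\ph$ by \eqref{eq:weighted-L2}. The $c$-part gives $\int c|\nabla\ph|^{q-2}\nabla\ph\cdot\nabla v$; by Cauchy--Schwarz it is at most
$$
\Bigl(\int c^2|\nabla\ph|^{2q-p}\Bigr)^{1/2}\|v\|_\ph,
$$
which is finite precisely because $2q-p\le0$ is impossible to integrate in general. So I would instead use $p\ge2q$ via $|\nabla\ph|^{q-1}\le|\nabla\ph|^{(p-2)/2}\cdot|\nabla\ph|^{q-p/2}$ and boundedness of $\nabla\ph$. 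The latter is available by $C^{1,\alpha}$ regularity up to the boundary under Assumption~\ref{ass:domain-regularity}, with $p=2$ handled directly. Since $q-p/2\le0$ only helps where $|\nabla\ph|$ is large, I would rather write $|\nabla\ph|^{q-1}=|\nabla\ph|^{(p-2)/2}|\nabla\ph|^{q-p/2}$ and note that this needs $q\ge p/2$. That is the wrong direction, so this is the main obstacle.

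The resolution I would try is to use $\Fcal(t\sigma\ph+v)$ directly rather than the derivative criterion. By homogeneity, $\Fcal(\sigma\ph)=0$, and I would expand
$$
|\sigma t\ph+v|^q-|t\ph|^q\quad\text{and}\quad|t\nabla\ph+\nabla v|^q-|t\nabla\ph|^q
$$
pointwise. For $q\ge1$ the $c$-term enters with a minus sign, so by convexity it is bounded above by $-q\,c\,t^{q-1}|\nabla\ph|^{q-2}\nabla\ph\cdot\nabla v$. The first-order terms combine into $t^{q-1}\dual{\Fcal'(\sigma\ph)}{v}$. Here resonance does help: $\Fcal$ is $C^1$ and vanishes on $\R\ph$, while the constrained extremality of $\Fcal/H$-type quotients is not available. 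So I would still need $\dual{\Fcal'(\sigma\ph)}{v}\le C\|v\|_\ph$. I would obtain it from regularity, bounding $|\nabla\ph|^{q-1}\le C|\nabla\ph|^{(p-2)/2}$ wherever $|\nabla\ph|\le1$, which holds since $q-1\ge(p-2)/2$ iff $p\le2q$.

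I therefore expect the genuine proof to handle this term more cleverly, perhaps through the weighted embedding of \cite{BobkovKolonitskii2023} combined with the $\|\nabla v\|_p^p$ part. Then $\|v\|_\ph\le(\|v\|_\ph^2+\|\nabla v\|_p^p)^{1/2}$ yields \eqref{eq:upper-A} with $\kappa=1/2$. The remaining second-order terms are of order $\|v\|_2^2$ and $\|\nabla v\|_p^{\min}$-type quantities, controlled the same way.
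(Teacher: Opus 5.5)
Your Steps~1 and~2 are fine. The convexity/Mazur argument for weak upper semicontinuity is an equivalent variant of the paper's, which uses $c^{1/q}\nabla u_n\weakto c^{1/q}\nabla u$ in $L^q$.

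\textbf{The gap is in Step~3, exactly where you flag an obstacle.} You never establish
\[
\int_\Om c\,|\nabla\ph|^{q-1}|\nabla v|\,dx\le C\|v\|_{\ph}\quad\text{for }p>2q.
\]
Your fallback compares $|\nabla\ph|^{q-1}$ with $|\nabla\ph|^{(p-2)/2}$ on $\{|\nabla\ph|\le1\}$. That comparison requires $p\le2q$, so under the hypotheses it covers only $p=2q$. This is precisely the borderline case that is useless for Theorem~\ref{thm:three}, which needs $p\kappa>q$, i.e.\ $p>2q$. The closing remarks (``handle this term more cleverly'', ``second-order terms controlled the same way'') are not an argument. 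As written, Assumption~\ref{ass:upper} is left unproved.

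\textbf{The missing idea is the integrability of negative powers of $|\nabla\ph|$.} Under Assumption~\ref{ass:domain-regularity},
\[
\int_\Om|\nabla\ph|^{-\gamma}\,dx<+\infty\quad\text{for every }\gamma<p-1.
\]
For $N\ge2$ this is \cite[Theorem~C.5]{BrascoLindgren2023}; for $N=1$ it follows from the local behavior of the $p$-sine at its critical point. Since $q>1$, the exponent $\gamma=p-2q$ satisfies $\gamma<p-1$. So the factor $\bigl(\int_\Om c^2|\nabla\ph|^{2q-p}\,dx\bigr)^{1/2}$, which you produced by Cauchy--Schwarz and then dismissed as non-integrable, is in fact finite, and your own estimate closes.

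The paper does this splitting directly at the intermediate point $w=t\sigma\ph+\theta v$:
\begin{itemize}
\item It writes $c(|\nabla\ph|+|\nabla v|)^{q-1}|\nabla v|$ as $(|\nabla\ph|+|\nabla v|)^{(p-2)/2}|\nabla v|$ times $c(|\nabla\ph|+|\nabla v|)^{-(p-2q)/2}$.
\item It bounds the $L^2$ norm of the second factor using $(|\nabla\ph|+|\nabla v|)^{-(p-2q)}\le|\nabla\ph|^{-(p-2q)}$; this is where $p\ge2q$ enters.
\item It bounds the $L^2$ norm of the first factor by $C(\|v\|_{\ph}^2+\|\nabla v\|_p^p)^{1/2}$.
\item The $a$-part is handled by H\"older with $2(q-1)<p$ and \eqref{eq:weighted-L2}.
\end{itemize}
This yields \eqref{eq:derivative-A} with $\kappa=1/2$.

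Your alternative of expanding $\Fcal(t\sigma\ph+v)-\Fcal(t\sigma\ph)$, using convexity of $\xi\mapsto|\xi|^q$ to discard the $c$-remainder, would also work. It needs two things you have not supplied:
\begin{itemize}
\item the same integrability, to bound $\dual{\Fcal'(\sigma\ph)}{v}$;
\item a genuine bound on the $a$-remainder, e.g.\ $C\int_\Om|v|^q\,dx$ for $q<2$, or $C\int_\Om(\ph^{q-2}v^2+|v|^q)\,dx$ combined with $L^2$--$L^p$ interpolation for $q\ge2$.
\end{itemize}
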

\begin{proof}
Since the map $h\mapsto|h|^{q-2}h$ is continuous from $L^q(\Om;\R^d)$ to $L^{q'}(\Om;\R^d)$, $d \geq 1$, the continuous embedding of $W_0^{1,p}(\Omega;\R^d)$ into $L^q(\Om;\R^d)$ gives the continuity of the derivative
\begin{equation}\label{eq:deriv1}
 \dual{\Fcal'(u)}{v}=\int_\Om a|u|^{q-2}uv\,dx-\int_\Om c|\nabla u|^{q-2}\nabla u\cdot\nabla v\,dx,
 \quad u,v\in\W.
\end{equation}
This means that $\Fcal \in C^1(\W,\mathbb{R})$. 
Let $u_n\weakto u$ in $\W$. Since $\Om$ is bounded, we have $\nabla u_n\weakto\nabla u$ in $L^q(\Om;\R^N)$. 
Noting that the multiplication by $c^{1/q}$ is a bounded linear map from $L^p(\Om;\R^N)$ to $L^q(\Om;\R^N)$ by the H\"older inequality, we see that $c^{1/q}\nabla u_n\weakto c^{1/q}\nabla u$ in $L^q(\Om;\R^N)$. 
In addition, the compact embedding of $\W$ into $L^q(\Omega)$ gives $u_n \to u$ in $L^q(\Omega)$.
Consequently, $\Fcal(u) \geq \limsup_{n \to +\infty} \Fcal(u_n)$, which is the desired sequential weak upper semicontinuity.

In order to justify Assumption~\ref{ass:upper}, let us verify the estimate \eqref{eq:derivative-A} from Proposition~\ref{prop:upper-criterion}. 
Take $t,\sigma,v$ as in Proposition~\ref{prop:upper-criterion}, let $0\le\theta\le1$, and set $w=t\sigma\ph+\theta v$.
Then $|t|\le2$, $|w|\le2\ph+|v|$, and $|\nabla w|\le2|\nabla\ph|+|\nabla v|$ a.e.\ in $\Omega$.
Using \eqref{eq:deriv1}, we obtain
\begin{align*}
 |\dual{\Fcal'(w)}{v}|
 &\le C\int_\Om |a|(|\ph|+|v|)^{q-1}|v|\,dx\\
 &\quad+C\int_\Om c(|\nabla\ph|+|\nabla v|)^{q-1}
 |\nabla v|\,dx=:CI_0+CI_1,
\end{align*}
where $C>0$ does not depend on $t,\sigma,v$. 
In view of Assumption~\ref{ass:domain-regularity}, we have $\ph \in C^{1,\beta}(\overline{\Omega})$ for some $\beta \in (0,1)$ by \cite{Lieberman}. 
Since $p\ge2q$, we have $2(q-1)<p$.
Thus, taking $\delta>0$ sufficiently small and using the Poincar\'e inequality and \eqref{eq:weighted-L2}, we obtain
$$
 I_0
 \le \|a\|_\infty
 \left(\int_\Om(\ph+|v|)^{2(q-1)}\,dx\right)^{1/2}\|v\|_2
 \le C\|v\|_{\ph}
 \le C\left(\|v\|_{\ph}^2+\|\nabla v\|_p^p\right)^{1/2}.
$$
To estimate $I_1$, we first show that
\begin{equation}\label{eq:double2}
 \int_\Om c^2|\nabla\ph|^{-(p-2q)}\,dx<+\infty.
\end{equation}
Since $p-2q<p-1$, \cite[Theorem~C.5]{BrascoLindgren2023} gives $\int_\Om|\nabla\ph|^{-(p-2q)}\,dx<+\infty$ under Assumption~\ref{ass:domain-regularity} in the case $N \geq 2$. 
For $N=1$, the same integrability follows from the local behavior of the $p$-sine derivative at its critical point, see \cite[p.~1234, proof of Proposition~11]{BobkovTanaka2018}.
Since $c\in L^\infty(\Om)$, these results yield \eqref{eq:double2}. 
Now, by the H\"older inequality we get
\begin{equation}\label{eq:I11}
 I_1
 \le\left(\int_\Om
 (|\nabla\ph|+|\nabla v|)^{p-2}|\nabla v|^2\,dx\right)^{1/2} 
 \left(\int_\Om
 \frac{c^2}{(|\nabla\ph|+|\nabla v|)^{p-2q}}\,dx\right)^{1/2}.
\end{equation}
The second term on the right-hand side of \eqref{eq:I11} is bounded by \eqref{eq:double2}, and the first term is bounded by $C(\|v\|_{\ph}^2+\|\nabla v\|_p^p)^{1/2}$. 
Applying Proposition~\ref{prop:upper-criterion}, we finish the proof.
\end{proof}

If we suppose, in addition to the assumptions of Lemma~\ref{lem:model-upper}, that there exists $z \in \W$ such that 
\begin{equation}\label{eq:model-derivative}
 \dual{\Fcal'(\ph)}{z}
 =\int_\Om a\ph^{q-1}z\,dx
 -\int_\Om c|\nabla\ph|^{q-2}\nabla\ph\cdot\nabla z\,dx > 0,
\end{equation}
then Proposition~\ref{prop:approach-criterion} verifies Assumption~\ref{ass:approach} with $\kappa=1/2$. 
The inequality \eqref{eq:model-derivative} says that $\ph$, being the first eigenfunction of the $p$-Laplacian, is not an eigenfunction of the weighted $q$-Laplacian. 
In this case, Theorem~\ref{thm:geometry} (with $p \geq 2q$) is applicable to $E_\lambda$.

Let us now consider Theorem~\ref{thm:three} for $E_\lambda$ with the perturbation
\begin{equation}\label{eq:model-perturbation}
 \widetilde{\Fcal}(u)=\frac1q\int_\Om b(x)|u|^q\,dx,
 \quad \text{where}~ b\in L^\infty(\Om).
\end{equation}
The formula~\eqref{eq:deriv1}, with $a$ replaced by $a+\mu b$, shows that $\Fcal_\mu\in C^1(\W,\R)$ for every $\mu\ge0$.
The sequential weak upper semicontinuity argument from the proof of Lemma~\ref{lem:model-upper} also applies with $a+\mu b$ in place of $a$, and hence the assumption~\ref{thm:three:as1} of Theorem~\ref{thm:three} is satisfied.
The assumption~\ref{thm:three:as2} follows by the standard arguments, see, e.g., \cite[Section~2]{DrabekRobinson1999}.
Let us verify the assumption on $\Acal_\mu$ of Theorem~\ref{thm:three}. 

\begin{lemma}\label{lem:model-sign-path}
Let $p > q>1$ and $a,b,c\in L^\infty(\Om)$ with $c\ge0$. 
If $\Fcal(\ph)=0$ and $\widetilde{\Fcal}(\ph)>0$, then, for any $\mu>0$ and  $u\in\Acal_\mu$, one of $u$ and $-u$ can be joined to $\ph$ by a continuous path in $\Acal_\mu$.
\end{lemma}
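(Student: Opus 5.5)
Here $\Fcal_\mu(u)=\frac1q\int_\Om\bigl((a+\mu b)|u|^q-c|\nabla u|^q\bigr)dx$ is even and depends only on $|u|$ and $|\nabla u|$. Since $|\nabla|u||=|\nabla u|$ a.e., we have $\Fcal_\mu(|u|)=\Fcal_\mu(u)$. The plan is to connect $u\in\Acal_\mu$ to $\ph$ in three stages: first from $u$ (or $-u$) to $|u|$, then from $|u|$ to a suitable multiple of $\ph$, and finally along the ray $\R_+\ph$ to $\ph$. The last stage is trivial: $\Fcal_\mu(s\ph)=s^q\mu\widetilde{\Fcal}(\ph)>0$ for all $s>0$, so the ray $\{s\ph:\,s>0\}$ lies in $\Acal_\mu$.

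\textbf{Stage 1: from $u$ to $|u|$.} Write $u=u^+-u^-$. Since $u^+$ and $u^-$ have disjoint supports up to null sets (and $\nabla u^\pm$ likewise), we get $\Fcal_\mu(u)=\Fcal_\mu(u^+)+\Fcal_\mu(u^-)$. Hence at least one of $\Fcal_\mu(u^\pm)$ is positive; replacing $u$ by $-u$ if needed, assume $\Fcal_\mu(u^+)>0$. Consider the path $\gamma(s)=u^+-(1-2s)u^-$, $s\in[0,1]$, which is continuous in $\W$ and goes from $u$ to $|u|$. By disjointness and $q$-homogeneity,
$$
\Fcal_\mu(\gamma(s))=\Fcal_\mu(u^+)+|1-2s|^q\Fcal_\mu(u^-).
$$
This expression is a convex combination-type interpolation between $\Fcal_\mu(u^+)$ (at $s=1/2$) and $\Fcal_\mu(u)$ (at $s=0,1$). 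Both values are positive, and the right-hand side is monotone in $|1-2s|^q\in[0,1]$, so it stays positive along the whole path.

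\textbf{Stage 2: from $|u|$ to $\ph$.} Set $v=|u|\ge0$, so that $\Fcal_\mu(v)>0$. The obstacle is that a straight segment $(1-s)v+s\ph$ need not stay in $\Acal_\mu$, since $\Fcal_\mu$ is not concave because of the gradient term. I would instead use the $q$-th power interpolation familiar from hidden-convexity arguments: $\gamma(s)=\bigl((1-s)v^q+sT^q\ph^q\bigr)^{1/q}$ with $T>0$ a constant. The $|u|^q$ part of $\Fcal_\mu$ is then linear in $s$, and the functional $w\mapsto\int_\Om c|\nabla w|^q\,dx$ is convex along such curves for nonnegative functions (the Díaz--Saá / Brasco--Franzina type convexity). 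It follows that $-\Fcal_\mu$ is convex along $\gamma$, so
$$
\Fcal_\mu(\gamma(s))\ge(1-s)\Fcal_\mu(v)+sT^q\mu\widetilde{\Fcal}(\ph)>0.
$$
Two points need checking. The first is that $\gamma(s)\in\W$ and depends continuously on $s$; this is where I expect the main technical work, using $\ph\in C^{1}$ together with the regularity available in the setting (or, if needed, an approximation of $v$ by functions comparable to $\ph$). The second is the validity of the hidden convexity inequality for general nonnegative $v$, which is pointwise $|\nabla\gamma(s)|^q\le(1-s)|\nabla v|^q+s|\nabla(T\ph)|^q$.

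\textbf{Fallback.} If the interpolation path proves problematic, I would use a simpler route. First scale $v$ down along $\{tv:\,t\in(0,1]\}$, which stays in $\Acal_\mu$. Then join $tv$ to $T\ph$ by a path of the form $w_s=\max\{tv,\,sT\ph\}$, for which $|\nabla w_s|$ is pointwise one of $|\nabla(tv)|$ or $|\nabla(sT\ph)|$, and control the sign of $\Fcal_\mu(w_s)$ by splitting $\Om$ into the two regions. The hidden-convexity path is the primary plan.
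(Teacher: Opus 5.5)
Your proposal is correct and follows essentially the paper's route. You split $u=u^+-u^-$ and use additivity over disjoint supports to reach a nonnegative element of $\Acal_\mu$, then join it to $\ph$ by the hidden-convexity path $\xi_s=((1-s)v^q+s\ph^q)^{1/q}$, using the Brasco--Franzina pointwise inequality together with $c\ge0$; going on to $|u|$ rather than stopping at $u^+$, the extra scaling $T$, and the fallback are all immaterial. The continuity point you flag is routine (the paper also just asserts it): $|\nabla\xi_s|\le(1-s)^{1/q}|\nabla v|+s^{1/q}|\nabla\ph|$ a.e., so dominated convergence gives continuity of $s\mapsto\xi_s$ in $\W$.
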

\begin{proof}
For $u\in\Acal_\mu$, write $u=u^+-u^-$, where $u^\pm=\max\{\pm u,0\}$.
The disjointness of supports of $u^+$ and $u^-$ gives $\Fcal_\mu(u)=\Fcal_\mu(u^+)+\Fcal_\mu(u^-)>0$, and hence at least one of $u^+$ and $u^-$ belongs to $\Acal_\mu$.
If $\Fcal_\mu(u^+)>0$, then we use the path $u^+-t u^-$, $t \in [0,1]$, to join $u^+$ to $u$. 
It is not hard to see that this path belongs to $\Acal_\mu$. 
The case $\Fcal_\mu(u^-)>0$ is handled similarly.

If $v,w\in\Acal_\mu$ are nonnegative, then the path $\xi_s=((1-s)v^q+sw^q)^{1/q}$, $s\in[0,1]$, satisfies the hidden convexity estimate \cite[Proposition~2.6]{BrascoFranzina2014}:
$$
|\nabla\xi_s|^q
\le(1-s)|\nabla v|^q+s|\nabla w|^q
 \quad\text{a.e. in } \Om,
$$
which yields
$$
 \Fcal_\mu(\xi_s)
 \ge(1-s)\Fcal_\mu(v)+s\Fcal_\mu(w)>0
 \quad \text{for any}~ s \in [0,1].
$$
Moreover, it is not hard to see that the path $s \mapsto \xi_s$ is continuous in $\W$.
Taking $w=\ph$ and concatenating with the path obtained above, we conclude that one of $u$ and $-u$ can be continuously joined to $\ph$ in $\Acal_\mu$, through its positive or negative part, for every $u\in\Acal_\mu$.
\end{proof}

Under the assumptions of Lemmas~\ref{lem:model-upper}, \ref{lem:model-sign-path}, and \eqref{eq:model-derivative}, we can apply Theorem~\ref{thm:three} to $\Fcal_\mu$.
Consequently, that theorem provides three nonzero critical points in a left neighborhood of $\lambda_1$ and two critical points in its right neighborhood, in appropriate regions of the parameter plane.

\subsection{Nonlinear Fredholm alternative}\label{sec:nonlinear-fredholm}

Recall that it was proved in \cite[Theorems~2.6 and~2.7]{Takac2006} that, for $p>2$, the equation \eqref{eq:intro-fredholm} possesses at least three solutions in a punctured neighborhood of $\lambda_1$ when the source function is a perturbation of $f$ satisfying \eqref{eq:intro-orthogonality}.
The following result weakens the regularity of the source functions and provides more explicit relation between the parameters. 
On the other hand, we do not state the existence of at least \textit{three} solutions in a right neighborhood of $\lambda_1$. 

\begin{proposition}\label{prop:fredholm}
Let $p>2$, let Assumption~\ref{ass:domain-regularity} hold, let $f_0\in D_{\ph}^*\setminus\{0\}$ be such that $f_0[\ph]=0$, 
and let $f_1\in\W^*$ satisfy $f_1[\ph]>0$.
Consider the functional
\begin{equation}\label{eq:fredholm-energy}
 J_{\lambda,\mu}(u)=\frac1pH_\lambda(u)-f_0[u]-\mu f_1[u], \quad u \in \W.
\end{equation}
Then the following assertions hold:
\begin{enumerate}[label=\textup{(\roman*)}]
\item\label{eq:fredholm-energy:1} At $(\lambda,\mu)=(\lambda_1,0)$, $J_{\lambda_1,0}$ is bounded from below, has a negative infimum, and possesses a nonzero global minimizer.
\item\label{eq:fredholm-energy:2} There exist $\mu_*,c_*>0$ such that, whenever $\mu \in (0, \mu_*)$ and $\lambda \in (\lambda_1-c_*\mu^{p}, \lambda_1)$, $J_{\lambda,\mu}$ has at least three distinct nonzero critical points. 
\item\label{eq:fredholm-energy:3} With $\mu_*$ as in \ref{eq:fredholm-energy:2}, there exists $\delta_*>0$ such that, whenever $\mu \in (0, \mu_*)$ and $\lambda \in (\lambda_1, \lambda_1+\delta_*)$, $J_{\lambda,\mu}$ has at least two distinct nonzero critical points. 
\end{enumerate}
\end{proposition}
\begin{proof}
Under the imposed assumptions, we have $q=1$, $\Fcal(u)=f_0[u]$, and $\widetilde{\Fcal}(u)=f_1[u]$.
Corollary~\ref{cor:linear-source} shows that $\Fcal$ satisfies Assumptions~\ref{ass:resonance}, \ref{ass:upper}, \ref{ass:approach} with $\kappa=1/2$.
Corollary~\ref{cor:linear-source} also shows that $f_0$ restricts to a continuous linear functional on $\W$.
Since the weak convergence in $\W$ implies the strong convergence in $L^p(\Om)$, the convexity of $u\mapsto\|\nabla u\|_p^p$ and the weak continuity of $f_0$ and $f_1$ imply that $J_{\lambda,\mu}$ is sequentially weakly lower semicontinuous.
As $p>2$, Theorem~\ref{thm:geometry}~\ref{item:geometry-resonant}~\ref{item:geometry-supercritical-order} proves \ref{eq:fredholm-energy:1}.
Adopting the arguments from \cite[Section~2]{DrabekRobinson1999}, we obtain the Palais--Smale condition for $J_{\lambda,\mu}$. 
Finally, the set $\Acal_\mu=\{u:(f_0+\mu f_1)[u]>0\}$ 
is convex and hence path-connected.
Therefore, Theorem~\ref{thm:three} applies, which gives the desired conclusion. 
\end{proof}

\end{document}